\DeclareSymbolFont{rsfs}{U}{rsfs}{m}{n}
\DeclareSymbolFontAlphabet{\mathscrsfs}{rsfs}
\theoremstyle{definition}
\newtheorem{Def}{Definition}[section]
\newtheorem{Rmk}[Def]{Remark}
\theoremstyle{plain}
\newtheorem{Thm}[Def]{Theorem}
\newtheorem{Lemma}[Def]{Lemma}
\numberwithin{equation}{section}
\newcommand{\R}{\mathbb{R}}
\newcommand{\E}{\mathbb{E}}
\newcommand{\Z}{\mathbb{Z}}
\newcommand{\N}{\mathbb{N}}
\newcommand\reallywidehat[1]{%
\savestack{\tmpbox}{\stretchto{%
  \scaleto{%
    \scalerel*[\widthof{\ensuremath{#1}}]{\kern-.6pt\bigwedge\kern-.6pt}%
    {\rule[-\textheight/2]{1ex}{\textheight}}
  }{\textheight}%
}{0.5ex}}%
\stackon[1pt]{#1}{\tmpbox}%
}
\newcommand{\footremember}[2]{%
   \footnote{#2}
    \newcounter{#1}
    \setcounter{#1}{\value{footnote}}%
}
\def\oversortoftilde#1{\mathop{\vbox{\m@th\ialign{##\crcr\noalign{\kern3\p@}%
      \sortoftildefill\crcr\noalign{\kern3\p@\nointerlineskip}%
      $\hfil\displaystyle{#1}\hfil$\crcr}}}\limits}
\def\sortoftildefill{$\m@th \setbox\z@\hbox{$\braceld$}%
  \braceld\leaders\vrule \@height\ht\z@ \@depth\z@\hfill\braceru$}
\title{Numerical simulation of Generalized Hermite Processes}
\author{A Ayache \footremember{Lille}{Univ. Lille, 
CNRS, UMR 8524 - Laboratoire Paul Painlev\'e, F-59000 Lille, France. antoine.ayache@univ-lille.fr}, J. Hamonier \footnote{Univ. Lille, CHU Lille, ULR 2694 - METRICS : \'Evaluation des technologies de sant\'e et des
pratiques m\'edicales, F-59000 Lille, France. julien.hamonier@univ-lille.fr}, L. Loosveldt\footnote{ \underline{\textbf{Corresponding author:}} Université de Liège, Département de Mathématique -- zone Polytech 1, 12 allée de la Découverte, B\^at. B37, B-4000 Liège. l.loosveldt@uliege.be}}
\begin{document}

\maketitle

\begin{abstract}
Hermite processes are paradigmatic examples of stochastic processes which can belong to any Wiener chaos of an arbitrary order; the well-known fractional Brownian motion belonging to the Gaussian first order Wiener chaos and the Rosenblatt process belonging to the non-Gaussian second order Wiener chaos are two particular cases of them. Except these two particular cases no simulation method for sample paths of Hermite processes is available so far. The goal of our article is to introduce a new method which potentially allows to simulate sample paths of any Hermite process and even those of any generalized Hermite process.
 Our starting point is the representation for the latter process as random wavelet-type series, obtained in our very recent paper \cite{ayachehamonierloosveldt}. We construct from it a "concrete" sequence of piecewise linear continuous random functions which almost surely approximate sample paths of this process for the uniform norm on any compact interval, and we provide an almost sure estimate of the approximation error. Then, for the Rosenblatt process and more importantly for the third order Hermite process, we propose algorithms allowing to implement this sequence and we illustrate them by several simulations. Python routines implementing these synthesis procedures are available upon request.
 

\end{abstract}
\noindent \textit{Keywords}: High order Wiener chaos, self-similar process, multiresolution analysis, FARIMA sequence, wavelet basis.

\noindent  \textit{2020 MSC}: Primary: 60G22, 60G18, 42C40; secondary: 41A58.

\section{Introduction and motivation}

For many years, there has been growing interest in modelling many real-life situations using fractional stochastic processes. Among other things, these families of parametri\-zed processes offer a natural framework for simulating phenomena with short- or long- range dependence properties, as well as self-similarity. We refer e.g. to the monographs \cite{doukhan2002theory,Embrechts2002} for a comprehensive view on both properties. A standard strategy consists in using the famous fractional Brownian motion \cite{mandelbrot1968fractional}. This process has been for instance used for simulations in astronomy \cite{Pan2007}, biology \cite{Ecksteina}, climatology \cite{Franzke2020}, image processing \cite{McGarrya}, internet traffic modelling \cite{Norros1995} and physics \cite{Metzler2014}. The fractional Brownian motion of Hurst paramter $H \in (0,1)$ on a probability space $(\Omega,\mathcal{F},\mathbb{P})$ is known to be the unique centred Gaussian process $\{B^H_t\}_{t \geq 0}$ with covariance function given, for all, $s,t \geq 0$, by
\begin{equation}\label{eqn:cov}
 \mathbb{E}[B_t^H B_s^H] = \frac{1}{2}(|t|^{2H}+|s|^{2H}-|t-s|^{2H}).
\end{equation}
To numerically simulate a fractional Brownian motion, one can use a wavelet-based synthesis, which relies on its wavelet-type random series representation due to Meyer, Sellan and Taqqu \cite{MR1755100}. We refer to the papers \cite{Abry1996,JSSv005i07} for details and practical features about the implementation of such an algorithm.

In many situations, the Gaussianity of the fractional Brownian motion is a too strong assumption to model a phenomenon of interest. Such a case arises, for instance, in mathematical finance \cite{Fauth2016,STOYANOV2019}, hydrology \cite{Taqqu1978} or internet traffic modelling \cite{VivekKumarSehgal_2019}. To drop the Gaussian assumption, one can use the affiliation of fractional Brownian motion in the family of Hermite processes.

For $d \geq 1$ and $H \in (1/2,1)$, the Hermite process of order $d$ and Hurst parameter $H$ is defined, at any time $t\in\R_+$, by the multiple Wiener integral
\begin{equation}\label{eqn:def:ghp}
\sqrt{\frac{H(2H-1)}{d! \beta\left(\frac{1}{2}-\frac{1-H}{d}, \frac{2-2H}{d}\right)}}\int_{\R^d}' \left( \int_0^t \prod_{j=1}^d (s-x_\ell)_+^{\frac{H-1}{d}-\frac{1}{2}} \, ds\right) dB(x_1) \cdots dB(x_d),
\end{equation}
where $\{B(x)\}_{x \in \R}$ is a usual Brownian motion, $\beta$ denotes the classical Euler's Beta function \footnote{for all $x,y>0$, $\beta(x,y)=\int_0^1 t^{x-1}(1-t)^{y-1}dt$} and we use the convention that, for all $(x,\alpha) \in \R^2$,
\begin{align*}
x_+^\alpha:=\left\{ \begin{array}{cl}
                       x^\alpha & \text{if } x > 0\\
                       0 & \text{otherwise.}
                     \end{array} \right.
\end{align*}
Observe that the symbol $\int_{\R^d}'$ in \eqref{eqn:def:ghp} denotes integration over $\R^d$ with diagonals $\{x_\ell=x_{\ell'}\}$, $\ell \neq \ell'$, excluded. It is the paradigmatic example of a stochastic process in the Wiener chaos of order $d$. When $d=1$, it reduces to the fractional Brownian motion, which is the only Gaussian process in this family. The Hermite process of any order is $H$-self-similar, has stationary increments, exhibits long-range dependence and has the same covariance function \eqref{eqn:cov} as the fractional Brownian motion. In particular, its sample paths are, on each compact interval, Hölder continuous of order $\delta$ for every $\delta \in (0,H)$.

The Hermite process of order $2$ is also known as the Rosenblatt process. In \cite{ABRY20062326}, the authors propose two practical methods for simulating sample paths of it. They are based on the random wavelet-type series representation of this same process \cite{pipiras2004wavelet}. For the first method, the authors obtain a uniform convergence result \cite[eq 2.5]{ABRY20062326}. Whereas for the second method, the authors formulate a conjecture concerning the uniform convergence on any compact set of their representation \cite[eq 2.10]{ABRY20062326}. Note also that the authors in \cite{torrestudor09} propose an algorithm based on a Donsker-type theorem to numerically approximate the Rosenblatt process by perturbed random walks. Nevertheless, the rate of convergence of this approximation procedure is not provided.

In \cite{pipiras2004wavelet}, the author raises the question of whether the wavelet-type expansion he obtains for the Rosenblatt process can be extented to any Hermite process. This problem remained open up to our very recent work \cite{ayachehamonierloosveldt}. In this last paper, we even consider a larger family of processes which contains the Hermite ones (see Definition \ref{def:genhp} below). We provide a wavelet-type expansion for any process in this family which has the advantage to clearly separate the low and high frequency parts of the process. Moreover, we explicit the rate of convergence of this low frequency part towards its corresponding process.

 Despite the growing interest for Hermite processes in the literature, up to now, there is no method to numerically simulate sample paths of such a process, as soon as the order $d \geq 3$; the latter fact is mentioned on page 43  of the very recent book \cite{tudor2023}. The aim of the current paper is to make a breakthrough in this area in order to overcome the serious drawback due to the lack of simulation methods. Our starting point is the representation of any arbitrary generalized Hermite process as random wavelet-type series, obtained in our very recent paper \cite{ayachehamonierloosveldt}. We construct from it a "concrete" sequence of piecewise linear continuous random functions which almost surely approximate sample paths of this process for the uniform norm on any compact interval, and we provide an almost sure estimate of the approximation error. Then, for the Rosenblatt process and more importantly for the third order Hermite process, we propose algorithms allowing to implement this sequence and we illustrate them by several simulations.
 


We believe that our work could be of potential interest for applications for which Hermite processes are underlying models. Among other things, it can be used to test numerical efficiency of various statistical estimators for the Hurst parameter of Hermite processes, based, for instance, on quadratic variation \cite{Tudor2009,Chronopoulou2011,10.3150/23-BEJ1627}, (approached) maximum (log-)likelihood \cite{Whittle1962,Dahlhaus1989,Robinson1995,Bardet2014}, log-periodogram \cite{Moulines2003} or wavelet variation \cite{Bardet2010,Clausel2014,ORBi-4a2ba0cf-5d65-4a71-99c8-62f449b47bec}. 

The rest of the paper is organized as follows. In Section \ref{sec:statement}, we first precisely define the class of generalized Hermite processes, introduced in \cite{MR3163219}, as well as their wavelet-type random series representation obtained in our very recent article  \cite{ayachehamonierloosveldt}, then we state our two main results which are issued from the latter representation. Section \ref{sec:proofs} is devoted to their proofs. In Section \ref{sec:examples}, we provide algorithms to numerically simulate the Rosenblatt process and more importantly the Hermite process of order $3$, as well as the generalized Hermite process of order $3$. Then we test these algorithms though several simulations.


\section{Background and statement of the main results} \label{sec:statement}

The family of processes we are interested in, called generalized Hermite processes, was introduced in \cite{MR3163219}. These processes are self-similar and have stationary increments. They include Rosenblatt and any other Hermite process \cite[Chap. 4]{Pipiras_Taqqu_2017}.

\begin{Def}\label{def:genhp}
The generalized Hermite process of any arbitrary order $d\ge 2$, denoted by $\{X_{\mathbf{h}}^{(d)}(t)\}_{t \in \R_+}$, depends on a vector-valued Hurst parameter $\mathbf{h}:=(h_1,\ldots, h_d)$ whose coordinates satisfy 
\begin{equation}\label{eqn:hermi:cond}
h_1,\cdots,h_d \in (1/2,1) \text{ and } \sum_{\ell=1}^d h_\ell > d-\frac{1}{2}.
\end{equation}
This process belongs to the non-Gaussian $d$th Wiener chaos, since it is defined, for each $t \in \R_+$, through the multiple Wiener integral:
\begin{equation}
X^{(d)}_{\mathbf{h}}(t) := \int_{\R^d}' K^{(d)}_{\mathbf{h}}(t,x_1,\ldots,x_d) dB(x_1) \cdots dB(x_d),
\end{equation}
where the deterministic kernel function $K_{\mathbf{h}}$ is given, for every $(t,x_1,\ldots,x_d) \in \R_+ \times \R^d$, by
\begin{equation}\label{eqn:def:kernel}
K^{(d)}_{\mathbf{h}}(t,x_1,\cdots,x_d) := \frac{1}{\prod_{\ell=1}^d \Gamma(h_\ell-1/2)} \int_0^t \prod_{j=1}^d (s-x_\ell)_+^{h_\ell-3/2} \, ds.
\end{equation}
\end{Def}

\begin{Rmk}\label{rmk:holder}
As noted in \cite[Theorem 3.5.]{MR3163219}, the process $\{X_{\mathbf{h}}^{(d)}(t)\}_{t \in \R_+}$ has stationary increments and is self-similar with self-similarity index $H\in (1/2,1)$ given by 
\begin{equation}
\label{eq:add2}
H:=\sum_{\ell=1}^d h_\ell -d +1. 
\end{equation}
In particular, for any $s,t \geq 0$ and $p>0$, one has
\[ \E[|X_{\mathbf{h}}^{(d)}(t)-X_{\mathbf{h}}^{(d)}(s)|^p] = |t-s|^{Hp}\, \E[|X_{\mathbf{h}}^{(d)}(1)|^p].\]
As the hypercontractivity property for multiple integrals (see e.g.\cite[Theorem 2.7.2]{MR2962301}) entails that, for all such $p$, $\E[|X_{\mathbf{h}}^{(d)}(1)|^p]< \infty$, it follows from Kolmogorov continuity theorem that there exists a modification of $\{X_{\mathbf{h}}^{(d)}(t)\}_{t \in \R_+}$ and $\Omega_1$, an event of probability $1$, such that, on $\Omega_1$, sample paths of this modification are locally (that is on each compact interval) $\gamma$-Hölder continuous functions, for any $\gamma \in (0,H)$. All along this paper, we identify $\{X_{\mathbf{h}}^{(d)}(t)\}_{t \in \R_+}$ with this modification. In this case, it means that, for any $T>0$ and $\gamma \in (0,H)$, there is a positive finite random variable $C_{T,\gamma}$ such that, almost surely, for any $s,t \in [0,T]$
\begin{equation}\label{eqn:holderrmk}
|X_{\mathbf{h}}^{(d)}(t)-X_{\mathbf{h}}^{(d)}(s)| \leq C_{T,\gamma} |s-t|^\gamma.
\end{equation}
\end{Rmk}

\begin{Rmk}\label{rmk:normalisation}
Observe that when all the coordinates $h_1,\ldots, h_d$ of the vector-valued parameter $\mathbf{h}$ are equal to $1+\frac{H-1}{d}$, then the process $\{X_{\mathbf{h}}^{(d)}(t)\}_{t \in \R_+}$ reduces to the usual Hermite process of Hurst parameter $H$ and order $d$, up to a r- normalisation. The constant $\sqrt{\frac{H(2H-1)}{d! \beta\left(\frac{1}{2}-\frac{1-H}{d}, \frac{2-2H}{d}\right)}}$ used in \eqref{eqn:def:ghp} is chosen such that the process at time $1$ has unit variance. On the other side, the constant $\frac{1}{\prod_{\ell=1}^d \Gamma(h_\ell-1/2)}$ used in \eqref{eqn:def:kernel} is more practicable for computation in the general context of Definition \ref{def:genhp}.
\end{Rmk}

Before stating our main results, let us recall some notations used in \cite{ayachehamonierloosveldt}. 
First, $\phi$ and $\psi$ stand for, respectively, the univariate scaling function and mother wavelet associated with a Meyer orthonormal wavelet basis of $L^2(\R)$. Let us recall that $\phi$ and $\psi$ belong to the Schwartz class $\mathcal{S}(\R)$ of infinitely differentiable functions whose derivatives of any order rapidly decay at infinity. Moreover, the Fourier transforms $\widehat{\phi}$ and $\widehat{\psi}$ are infinitely differentiable compactly supported functions satisfying
\[
{\rm supp}\,\widehat{\phi}\subseteq \left [-\frac{4\pi}{3},\frac{4\pi}{3}\right]\quad\mbox{and}\quad {\rm supp}\,\widehat{\psi}\subseteq \left [-\frac{8\pi}{3},\frac{8\pi}{3}\right]\setminus \left(-\frac{2\pi}{3},\frac{2\pi}{3}\right).
\]

The wavelet-type expansion of generalized Hermite processes relies on so-called fractional primitives of the mother wavelet $\psi$.

\begin{Def}\label{def:fractwav}
For all $h \in [1/2,+\infty)$ (resp. $h\in (-\infty,1/2)$), the fractional primitive of $\psi$ of order $h-1/2$ (resp. the fractional derivative of $\psi$ of order $1/2-h$) is the function $\psi_h\in\mathcal{S}(\R)$ defined through its Fourier transform by:
\[ \widehat{\psi_h}(0)=0 \text{ and, for all $\xi \neq 0$ } \widehat{\psi_h}(\xi)=(i \xi)^{1/2-h}\widehat{\psi}(\xi).\]
\end{Def} 

Note that, since the compactly supported Fourier transform $\widehat{\psi}$ vanishes in a neighbourhood of $0$, fractional primitives and derivatives of a univariate Meyer mother wavelet $\psi$ belong to the Schwartz class $\mathcal{S}(\R)$. At the opposite, since $\widehat{\phi}(0)=1\ne 0$, fractional primitives and derivatives of a univariate Meyer scaling function $\phi$ fail to be smooth well-localized functions. In order to overcome this serious difficulty, a clever idea of \cite{MR1755100} was to "replace" fractional primitive or derivative of $\phi$ by the so called fractional scaling function $\Phi_\Delta^{(\delta)}$, which belongs to $\mathcal{S}(\R)$ and which was defined in \cite{MR1755100} as follows:

\begin{Def}\label{def:fractscal}
The \textit{fractional scaling function} of order $\delta \in (-\infty,+\infty)$ of a univariate Meyer scaling function $\phi$ is the function $\Phi_\Delta^{(\delta)}\in\mathcal{S}(\R)$ defined through its Fourier transform by:
\[ \widehat{\Phi}_\Delta^{(\delta)}(\xi) = \left( \frac{1-e^{-i \xi}}{i \xi} \right)^\delta \widehat{\phi}(\xi)~\forall \, \xi \neq 0 \text{ and }  \widehat{\Phi}_\Delta^{(\delta)}(0)=1.\]
Notice that, similarly to $\widehat{\phi}$, the function $\widehat{\Phi}_\Delta^{(\delta)}$ has a compact support satisfying 
\begin{equation}\label{eqn:supportfractscalfunction}
{\rm supp}\,\widehat{\Phi}_\Delta^{(\delta)}\subseteq \left [-\frac{4\pi}{3},\frac{4\pi}{3}\right].
\end{equation}
\end{Def}
One can check, from elementary properties of the Fourier transform on the space $\mathcal{S}(\R)$ (see e.g. the seminal book \cite{schwartz:78}) that $\Phi_\Delta^{(\delta)}$ and $\psi_h$ are well-defined functions belonging to $\mathcal{S}(\R)$, which means that they are infinitely differentiable functions satisfying, for all $m \in \N_0$ and $L>0$, 
\begin{equation}\label{maj:psih}
\sup_{x\in\R} \left\lbrace (3+|x|)^L \left (\Big | \frac{d^m}{dx^m}\Phi_\Delta^{(\delta)}(x)\Big|+\Big | \frac{d^m}{dx^m} \psi_h(x)\Big| \right) \right\rbrace < +\infty. 
\end{equation}

An other fundamental element of the wavelet-type expansion of generalized Hermite expansion is a family of generalized FARIMA sequences associated to the chaotic random variables $\mu_{J, \mathbf{k}}$ defined, for all $J \in \Z$ and $\mathbf{k} \in \Z^d$ by
\[ \mu_{J,\mathbf{k}} := 2^{J \frac{d}{2}}\int_{\R^d}' \phi(2^J x_1-k_1) \cdots \phi(2^J x_d-k_d) \, dB(x_1) \ldots dB(x_d).\]
For all $\delta \in (-1/2,1/2)$  we set $\gamma_0^{(\delta)}=1$, and for each $p \in \N$,
\[ \gamma_p^{(\delta)} = \frac{\delta \, \Gamma(p+\delta)}{\Gamma(p+1) \Gamma(\delta+1)}.\]
Then, for all $J \in \Z$, the generalized FARIMA sequence $(\sigma_{J, \mathbf{k}}^{(\mathbf{h})})_{\mathbf{k} \in \Z^d}$ is defined by
\begin{equation}\label{eq:ext-farima}
\sigma_{J,\textbf{k}}^{(\mathbf{h})}:=\sum_{\textbf{p}\in\N_0^d}\Big (\prod_{l=1}^d \gamma_{p_l}^{(h_l-1/2)}\Big)\mu_{J,\mathbf{k}-\mathbf{p}}.
\end{equation}
 Let us consider, for $J \in \Z$, the sequence $(g_{J,k}^\phi)_{ k \in \Z}$ of i.i.d. $\mathcal{N}(0,1)$ Gaussian random variable defined, for all $k \in \Z$, by
\[ g_{J,k}^\phi := 2^{J/2} \int_{\R} \phi(2^Jx-k) \, dB(x).\]
For such a $J$ and $\delta \in (-1/2,1/2)$, the usual Gaussian FARIMA $(0,\delta,0)$ sequence $(Z_{J,\ell}^{(\delta)})_{\ell \in \Z}$ associated to $(g_{J,k}^\phi)_{ k \in \Z}$ is given, for all $\ell \in \Z$, by
\[ Z_{J,\ell}^{(\delta)}:= g_{J,\ell}^\phi  + \sum_{p=1}^{+ \infty} \gamma_p^{(\delta)} g_{J,\ell-p}^\phi. \]
Then, we can use the more explicit expression \cite[Proposition 2.7]{ayachehamonierloosveldt}, for all $J \in \Z$ and $\mathbf{k} \in \Z^d$,
\begin{align}\label{eqn:defofsigma}
\sigma_{J,\textbf{k}}^{(\mathbf{h})} = \sum_{m=0}^{\lfloor d/2 \rfloor } (-1)^m \sum_{P \in \mathcal{P}_m^{(d)}} \prod_{r=1}^m \mathbb{E}[ Z_{J,k_{\ell_r}}^{(h_{\ell_r}-1/2)}Z_{J,k_{\ell_r'}}^{(h_{\ell_r'}-1/2)}] \prod_{s=m+1}^{d-m} Z_{J,k_{\ell_{s}''}}^{(h_{\ell_{s}''}-1/2)},
\end{align}
where $\mathcal{P}_m^{(d)}$ is the finite set of all partitions of $\{1,\dots,d\}$ with $m$ (non ordered) pairs and $d-2m$ singletons and the indices $\ell_r$, $\ell_r'$ and $\ell_{s}''$ are such that 
\[ P =\Big\{\{\ell_1,\ell_1 '\},\ldots,\{\ell_m,\ell_m '\},\{\ell_{m+1}''\},\ldots,\{\ell_{d-m}''\}\Big\}.\]

Finally, for all $\mathbf{j},\mathbf{k} \in \Z^d$, we set
\begin{equation}\label{eq:tensor-psi}
\psi_{\mathbf{j},\mathbf{k}}:=\bigotimes_{\ell=1}^{d}  \psi_{{j}_\ell,{k}_\ell},
\end{equation}
and we consider the chaotic random variable
\begin{equation}\label{eqn:rv:eps}
 \varepsilon_{\textbf{j},\textbf{k}} := \int_{\R^d}' \psi_{\mathbf{j},\mathbf{k}}(x_1, \ldots,x_d) \, dB(x_1)\ldots dB(x_d).
\end{equation}

Theorem 2.8 and Theorem 2.12 of \cite{ayachehamonierloosveldt} can then be summarized together as follows.

\begin{Thm}\label{thm:mainfirstpaper}
The random series
\begin{equation}\label{eq:approximprocess}
X_{\mathbf{h},J}^{(d)}(t)= 2^{-J(h_1+\ldots+h_d-d)} \sum_{\mathbf{k} \in \Z^d} \sigma_{J,\textbf{k}}^{(\mathbf{h})} \int_0^t \prod_{\ell=1}^d \Phi_\Delta^{(h_\ell-1/2)}(2^J s-k_\ell) \, ds,
\end{equation}
is almost surely, for each $J \in \N$, uniformly convergent in $t$ on each compact interval $I \subset \R_+$. Moreover, for all such $I$, there exists an almost surely finite random variable (depending on $I$) for which one has, almost surely, for each $J \in \N$,
\begin{align}
\| X^{(d)}_{\mathbf{h}} &-X^{(d)}_{\mathbf{h},J} \|_{I,\infty} \nonumber \\ & = \left \| \sum_{\substack{(\mathbf{j},\mathbf{k}) \in (\Z^d)^2 \\  \displaystyle\max_{\ell \in [\![1,d]\!]} j_\ell \geq J }} 2^{ j_1 (1-h_1)+\cdots + j_d (1-h_d)} \varepsilon_{\textbf{j},\textbf{k}}  \int_0^t \prod_{\ell=1}^d \psi_{h_\ell}(2^{j_\ell}s-k_\ell) \, ds \right \|_{I,\infty} \label{eq:theserieinrateofconvergence}\\
& \leq C J^{\frac{d}{2}} 2^{-J(  h_1+ \cdots + h_d -d + 1/2)}.\label{eq:rateofconvergence}
\end{align}
\end{Thm}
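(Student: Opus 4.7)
The identity \eqref{eq:theserieinrateofconvergence} is a direct consequence of the full wavelet-type representation of $X_{\mathbf{h}}^{(d)}$ proved in Theorems 2.8 and 2.12 of \cite{ayachehamonierloosveldt}: once the process is written as a double sum over all $(\mathbf{j},\mathbf{k}) \in (\Z^d)^2$ involving $\varepsilon_{\mathbf{j},\mathbf{k}}$ and the integrals $\int_0^t \prod_\ell \psi_{h_\ell}(2^{j_\ell}s - k_\ell)\, ds$, resumming the indices with $\max_\ell j_\ell < J$ via the generalized FARIMA sequence \eqref{eqn:defofsigma} produces the scaling-function approximation $X_{\mathbf{h},J}^{(d)}$, so the remainder is exactly the high-frequency tail in \eqref{eq:theserieinrateofconvergence}. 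The substance of the theorem therefore lies in the almost-sure bound \eqref{eq:rateofconvergence}.

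My approach for \eqref{eq:rateofconvergence} is the standard strategy for random series in a fixed Wiener chaos: first establish a sharp pointwise-in-$t$ $L^2(\Omega)$ estimate on the tail; then upgrade it to high $L^p$ moments using hypercontractivity on the $d$th Wiener chaos \cite[Theorem 2.7.2]{MR2962301}; and finally convert into an almost-sure uniform bound on $I$ through a Kolmogorov-type chaining argument together with a Borel--Cantelli step in the scale $J \in \N$. Write $S_J(t)$ for the tail series in \eqref{eq:theserieinrateofconvergence} and $A_{\mathbf{j},\mathbf{k}}(t) := \int_0^t \prod_{\ell=1}^d \psi_{h_\ell}(2^{j_\ell}s - k_\ell)\, ds$.

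For the $L^2$ bound, the orthonormality of the tensorized Meyer wavelets $\psi_{\mathbf{j},\mathbf{k}}$ in $L^2(\R^d)$ yields $\E[\varepsilon_{\mathbf{j},\mathbf{k}}\varepsilon_{\mathbf{j}',\mathbf{k}'}]=0$ unless $(\mathbf{j}',\mathbf{k}')$ is a permutation of $(\mathbf{j},\mathbf{k})$, so
\[
\E\bigl[|S_J(t)|^2\bigr] \leq d! \sum_{\substack{(\mathbf{j},\mathbf{k}) \in (\Z^d)^2 \\ \max_\ell j_\ell \geq J}} 2^{2\sum_\ell j_\ell(1-h_\ell)}\, |A_{\mathbf{j},\mathbf{k}}(t)|^2.
\]
Summing over $\mathbf{k}$ first, the family $\{\prod_\ell \psi_{h_\ell}(2^{j_\ell}\cdot - k_\ell)\}_{\mathbf{k}}$ behaves as a Bessel-type system applied to $\mathbf{1}_{[0,t]}$, producing a Parseval-like control on the internal $\mathbf{k}$-sum; the outer sum then factorises essentially into $d$ one-dimensional geometric series of the form $\sum_j 2^{-j(2h_\ell - 1)}$. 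Splitting according to which index realises the maximum $\geq J$ and collecting the subsums gives $\E[|S_J(t)|^2] \leq C\, 2^{-2J(h_1+\cdots+h_d-d+1/2)}$, which is the sharp rate since $h_1+\cdots+h_d-d+1/2 = H-1/2$.

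Hypercontractivity then supplies $\|S_J(t)\|_{L^p(\Omega)} \leq (p-1)^{d/2}\|S_J(t)\|_{L^2(\Omega)}$ for every $p \geq 2$, and the same reasoning applied to increments $S_J(t)-S_J(s)$, whose regularity is inherited from the Hölder continuity of $X_{\mathbf{h}}^{(d)}$ and $X_{\mathbf{h},J}^{(d)}$ (cf.\ Remark~\ref{rmk:holder}), feeds a chaining argument delivering uniform-in-$t$ control on $I$. A Borel--Cantelli step across $J \in \N$, with $p \asymp J$ optimising Markov's inequality, produces an almost-surely finite random constant $C(\omega)$ satisfying $\|S_J\|_{I,\infty} \leq C(\omega)\, J^{d/2}\, 2^{-J(h_1+\cdots+h_d-d+1/2)}$, the $J^{d/2}$ arising precisely from the factor $(p-1)^{d/2}$ once $p$ is of order $J$. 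The main obstacle in carrying this out will be the careful $L^2$ bookkeeping of $A_{\mathbf{j},\mathbf{k}}(t)$: naive pointwise bounds on the integrals do not yield the sharp exponent $H-1/2$ because they ignore the cancellation built into the infinitely many vanishing moments of the fractional Meyer wavelets $\psi_{h_\ell}$ (which come from $\widehat{\psi}$ vanishing near $0$), and one must exploit either repeated integration by parts or the $L^2$-boundedness of $\mathbf{1}_{[0,t]}$ against the Bessel-type family to harvest enough decay per scale to recover the correct exponent.
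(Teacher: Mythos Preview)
The present paper does not prove this theorem; it is quoted as a summary of Theorems~2.8 and~2.12 of the authors' earlier work \cite{ayachehamonierloosveldt}. Judging from the machinery used throughout Section~\ref{sec:proofs}---in particular the almost-sure coefficient bound \eqref{eqn:boundonsigma} and the reference to \cite[Lemma~2.2.17]{theseyassine}---the argument there follows a \emph{pathwise} route rather than your moment-based one: hypercontractivity and Borel--Cantelli are applied to each individual chaotic coefficient to obtain, on a single event of probability~$1$, a bound of the form $|\varepsilon_{\mathbf{j},\mathbf{k}}| \le C^*(\omega)\prod_{\ell} \sqrt{\log(3+|j_\ell|+|k_\ell|)}$, after which the tail series is summed by purely deterministic decay estimates on $A_{\mathbf{j},\mathbf{k}}(t)$ coming from \eqref{maj:psih}; the factor $J^{d/2}$ then arises from the $d$ logarithms. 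Your strategy instead places the probabilistic work at the level of the full sum $S_J$. Both are legitimate; the pathwise approach buys uniformity in~$t$ for free and dispenses with your chaining step, while yours would deliver clean $L^p$ statements if those were wanted.

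One caution on execution: your assertion that the outer $\mathbf{j}$-sum ``factorises essentially into $d$ one-dimensional geometric series'' is too optimistic. The quantity $A_{\mathbf{j},\mathbf{k}}(t)$ is a \emph{single} integral in $s\in[0,t]$ of a $d$-fold product, so $\sum_{\mathbf{k}}|A_{\mathbf{j},\mathbf{k}}(t)|^2$ does not split over~$\ell$ and depends on the relative sizes of the $j_\ell\in\Z$ in a genuinely coupled way (note also that each $j_\ell$ ranges over all of $\Z$, not $\N$, so the ``very negative $j_\ell$'' regime must be handled separately). The sharp exponent $H-1/2$ does emerge, but only after a case analysis in $\max_\ell j_\ell$ versus $\min_\ell j_\ell$. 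You correctly flag this bookkeeping as the main obstacle at the end of your proposal, but the quoted sentence earlier on undersells it.
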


In view of Theorem \ref{thm:mainfirstpaper}, we call $\{X_{\mathbf{h},J}^{(d)}(t) \}_{t \in \R_+}$ the \textit{approximation process at scale $J$}. Unfortunately, it has not yet a form which is well adapted to numerical simulations. In order to overcome this difficulty we need to replace it by the stochastic process $\{S_{\mathbf{h},J}^{(d)}(t) \}_{t \in \R_+}$, called the \textit{simulation process at scale $J$}, which will be defined in the sequel.
We aim at doing so without altering the rate of convergence given in the second part of Theorem \ref{thm:mainfirstpaper}. On this purpose, we recall the following definition of some important sets.

\begin{Def}\cite[Definition 4.3]{ayachehamonierloosveldt}
Let $a$ be a fixed real number satisfying $1/2 < a < 1$. For all $(j,k)\in \Z_+ \times \Z$, let us denote by $B_{j,k}$ the interval 
\begin{equation}\label{def:bjk}
B_{j,k}:= [ k2^{-j}-2^{-ja},k2^{-j}+2^{-ja} ].
\end{equation}
For all $j\in\N$ and for all $t\in\R_+$, we consider the three disjoint subsets of $\Z$:
\begin{align}
D_j^1(t) & := \lbrace k\in\Z : B_{j,k} \subseteq [0,t] \rbrace,  \label{def:dj1}\\
D_j^2(t) & := \lbrace k\in\Z\setminus D_j^1(t) : B_{j,k} \cap [0,t] \neq \emptyset \rbrace,\label{def:dj2}\\
D_j^3(t) & := \lbrace k\in\Z  : B_{j,k} \cap [0,t] = \emptyset \rbrace. \label{def:dj3}
\end{align}
These three sets, depending on $t$ and $a$, form a partition of $\Z$:
\[
\Z = \bigcup_{\ell=1}^3 D_j^{\ell}(t).
\]
\end{Def}
The set $(D_j^1(t))^d$ is the cartesian product of the set $D_j^1(t)$ $d$-times with itself. In \cite{ayachehamonierloosveldt}, we somehow remarked that the rate of convergence \eqref{eq:rateofconvergence} is mainly determined by the terms of the series in \eqref{eq:theserieinrateofconvergence} whose indices $\mathbf{k}$ belong to some $(D_j^1(t))^d$ sets, with $j \geq J$ and $t \in I$. Therefore, the idea behind the definition of $S_{\mathbf{h},J}^{(d)}(t)$, with $J \in \N$ and $t \geq 0$, is to use an appropriate enlargement of the ``diagonal set'' of $(D_J^1(t))^d$.

\begin{Def}
Let $\varepsilon$ be a fixed positive real number. For all $J\in\N$ and for all $t\in\R_+$, we consider the three subsets
\begin{align}
\mathcal{J}_J^1(t) & := \lbrace \mathbf{k} \in (D_J^1(t))^d \, : \, \max_{ \ell, \ell' \in  [\![1,d]\!]} |k_\ell - k_{\ell'}| \leq 2^{\varepsilon J} \rbrace,  \label{def:JJ1}\\
\mathcal{J}_J^2(t) & := \lbrace \mathbf{k} \in \Z^d \, : \,  \exists \, n \text{ such that } k_n \in D_J^2(t)\rbrace,\label{def:JJ2}\\
\mathcal{J}_J^3(t) & := \lbrace \mathbf{k} \in \Z^d \, : \,  \exists \, n \text{ such that } k_n \in D_J^3(t) \}. \label{def:JJ3}
\end{align}
\end{Def}

The simulation process $\{S_{\mathbf{h},J}^{(d)}(t) \}_{t \in \R_+}$ is then defined as follows.

\begin{Def}
For all $J \in \N$, the \textit{simulation process at scale $J$} of the generalized Hermite process $\{X_\mathbf{h}^{(d)}(t)\}_{t \in \R_+}$ is the process defined, for all $t \in \R_+$, by
\begin{equation}\label{eq:simulationprocess}
S_{\mathbf{h},J}^{(d)}(t) = 2^{-J(h_1+\ldots+h_d-d+1)} \sum_{\mathbf{k} \in \mathcal{J}_J^1(t)} \sigma_{J,\textbf{k}}^{(\mathbf{h})} \int_{\R} \prod_{\ell=1}^d \Phi_\Delta^{(h_\ell-1/2)}(s-k_\ell) \, ds.
\end{equation}
\end{Def}

\begin{Rmk}
From consecutive uses of the Parseval-Plancherel identity and the inclusion \eqref{eqn:supportfractscalfunction}, one can rewrite the integrals appearing in the right-hand side of \eqref{eq:simulationprocess} as integrals of compactly supported functions, see equation \eqref{prodscalfuncenfourier} below. This fact is particularly interesting for numerical computations.
\end{Rmk}

The first main result of this paper is that the sequence of simulation process $(\{S_{\mathbf{h},J}^{(d)}(t)\}_{t \in \R_+})_{J \in \N}$ converges, almost surely,  uniformly on any compact set of $\R_+$ to the generalized Hermite process $\{X_\mathbf{h}^{(d)}(t)\}_{t \in \R_+}$ with the same rate of convergence as the one established for the approximation process, in Theorem \ref{thm:mainfirstpaper}.

\begin{Thm}\label{thm:main}
For any compact interval $I \subset \R_+$, there exists an almost surely finite random variable $C$ (depending on $I$) for which one has, almost surely, for each $J \in \N$,
\begin{equation}\label{eqn:thm:main1}
\| X^{(d)}_{\mathbf{h}}-S_{\mathbf{h},J}^{(d)} \|_{I,\infty}  \leq C J^{\frac{d}{2}} 2^{-J(  h_1+ \cdots + h_d -d + 1/2)}.
\end{equation}
\end{Thm}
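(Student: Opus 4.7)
The plan is to use the triangle inequality
\[
\| X^{(d)}_{\mathbf{h}}-S_{\mathbf{h},J}^{(d)} \|_{I,\infty} \leq \| X^{(d)}_{\mathbf{h}}-X_{\mathbf{h},J}^{(d)} \|_{I,\infty} + \| X_{\mathbf{h},J}^{(d)}-S_{\mathbf{h},J}^{(d)} \|_{I,\infty}
\]
and to dispose of the first summand via the rate \eqref{eq:rateofconvergence} already established in Theorem~\ref{thm:mainfirstpaper}. The proof therefore reduces to showing that $\| X_{\mathbf{h},J}^{(d)}-S_{\mathbf{h},J}^{(d)} \|_{I,\infty}$ is almost surely of order at most $J^{d/2}\, 2^{-J(h_1+\cdots+h_d-d+1/2)}$.

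To compare $X_{\mathbf{h},J}^{(d)}$ with $S_{\mathbf{h},J}^{(d)}$ on a common footing, I first perform the change of variable $u = 2^J s$ in \eqref{eq:approximprocess}, which rewrites $X_{\mathbf{h},J}^{(d)}(t)$ with the normalisation $2^{-J(h_1+\cdots+h_d-d+1)}$ and integrals $\int_0^{2^J t} \prod_\ell \Phi_\Delta^{(h_\ell-1/2)}(u-k_\ell)\, du$. The difference $X_{\mathbf{h},J}^{(d)}(t)-S_{\mathbf{h},J}^{(d)}(t)$ then splits naturally into two contributions: (i) the sum over $\mathbf{k}\in\Z^d\setminus\mathcal{J}_J^1(t)$, still integrated over $[0,2^J t]$, and (ii) for $\mathbf{k}\in\mathcal{J}_J^1(t)$, the complementary integral over $\R\setminus[0,2^J t]$. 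I would further subdivide (i) using the partition \eqref{def:dj1}--\eqref{def:dj3}: the interior off-diagonal part $(D_J^1(t))^d\setminus\mathcal{J}_J^1(t)$, and the boundary/exterior part $\Z^d\setminus(D_J^1(t))^d\subseteq\mathcal{J}_J^2(t)\cup\mathcal{J}_J^3(t)$.

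For each of these three deterministic cuts, the Schwartz decay estimate \eqref{maj:psih} produces a substantial pointwise gain inside the integral. On $(D_J^1(t))^d\setminus\mathcal{J}_J^1(t)$, some pair $(k_\ell,k_{\ell'})$ satisfies $|k_\ell-k_{\ell'}|>2^{\varepsilon J}$, so for every real $u$ at least one of the factors $\Phi_\Delta^{(h_\ell-1/2)}(u-k_\ell)$ has super-polynomial decay in $2^{\varepsilon J}$. On $\mathcal{J}_J^2(t)\cup\mathcal{J}_J^3(t)$ the same reasoning as in the proof of Theorem~\ref{thm:mainfirstpaper} applies because some $k_n$ lies outside or straddles the endpoints of $[0,2^J t]$, forcing the corresponding factor to be negligible. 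For (ii), $\mathbf{k}\in\mathcal{J}_J^1(t)$ places every $k_\ell$ at distance at least $2^{J(1-a)}$ from the endpoints of $[0,2^J t]$, so the tail integrals $\int_{\R\setminus[0,2^J t]}$ are again of arbitrary polynomial order in $2^J$. These deterministic gains would then be combined with $L^p$ moment bounds on the remaining chaotic sums, obtained by hypercontractivity on the $d$-th Wiener chaos together with the explicit FARIMA-type formula \eqref{eqn:defofsigma} for $\sigma_{J,\mathbf{k}}^{(\mathbf{h})}$, and finally promoted to almost sure estimates uniform in $t\in I$ via a Kolmogorov continuity argument in $t$ (aided by the Hölder control \eqref{eqn:holderrmk}) and a Borel--Cantelli argument over $J\in\N$.

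The main obstacle will be the interior off-diagonal contribution: the cardinality of $(D_J^1(t))^d\setminus\mathcal{J}_J^1(t)$ is polynomial in $2^J$, and the FARIMA kernel producing $\sigma_{J,\mathbf{k}}^{(\mathbf{h})}$ yields only mildly decaying covariances, so one must carefully balance the super-polynomial Schwartz gain (tunable by the fixed parameter $\varepsilon>0$) against the polynomial cardinality and the slow covariance decay in order to preserve the target rate. The logarithmic factor $J^{d/2}$ in the final estimate is expected to reappear, as in the proof of \eqref{eq:rateofconvergence}, from the hypercontractivity moment bounds applied to the resulting $d$-fold chaotic sums after the Borel--Cantelli step.
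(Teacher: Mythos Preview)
Your decomposition of $X_{\mathbf{h},J}^{(d)}-S_{\mathbf{h},J}^{(d)}$ into an off-diagonal interior piece, a boundary/exterior piece indexed by $\mathcal{J}_J^2(t)\cup\mathcal{J}_J^3(t)$, and a tail integral over $\R\setminus[0,2^Jt]$ is essentially the same four-term split the paper uses (Lemmata~\ref{lemmeJ2}--\ref{lemm:final}), up to a harmless rearrangement of which indices carry the full integral versus the truncated one.

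Where you diverge from the paper is in the probabilistic part. You propose to control the random sums through $L^p$ hypercontractivity bounds, then pass to almost sure uniform control via Kolmogorov continuity in $t$ and Borel--Cantelli in $J$. The paper bypasses all of this by invoking at the outset the \emph{pathwise} almost sure bound
\[
|\sigma_{J,\mathbf{k}}^{(\mathbf{h})}|\leq C^{*}\prod_{\ell=1}^{d}\sqrt{\log(3+|J|+|k_\ell|)}
\]
(equation~\eqref{eqn:boundonsigma}, coming from the explicit FARIMA representation~\eqref{eqn:defofsigma}). Once this is in hand, every estimate becomes a purely deterministic computation and the supremum over $t\in[0,T]$ is taken trivially; the factor $J^{d/2}$ falls out directly from $\prod_\ell\sqrt{\log(3+J+|k_\ell|)}$ with $|k_\ell|\le 2^JT$, not from any choice of moment order $p\sim J$. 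Your route is not wrong in principle, but it is substantially heavier: you would have to control the $L^2$ norm of sums of the \emph{correlated} chaos variables $\sigma_{J,\mathbf{k}}^{(\mathbf{h})}$, then handle the $t$-dependence of the index sets $\mathcal{J}_J^i(t)$ (note that \eqref{eqn:holderrmk} is a regularity statement for $X_{\mathbf{h}}^{(d)}$, not for the difference $X_{\mathbf{h},J}^{(d)}-S_{\mathbf{h},J}^{(d)}$, so it does not directly feed a Kolmogorov argument for the latter). These steps are left vague in your sketch.

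A second, smaller difference concerns the interior off-diagonal term. You argue by pointwise Schwartz decay that one factor in the product is small whenever $|k_\ell-k_{\ell'}|>2^{\varepsilon J}$. This works, but you must still sum over $\mathbf{k}$, and the naive bound leaves you with a cardinality $\sim (2^JT)^d$ to absorb. The paper instead applies Parseval--Plancherel to the \emph{full-line} integral $\int_\R\prod_\ell\Phi_\Delta^{(h_\ell-1/2)}(s-k_\ell)\,ds$ (this is why the paper routes the off-diagonal piece through $\int_\R$ rather than $\int_0^{2^Jt}$), recognising it as the value at $(k_2-k_1,k_3-k_2,\ldots)$ of a Schwartz function on $\R^{d-1}$; see \eqref{prodscalfuncenfourier}--\eqref{eqn:clef}. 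This yields simultaneous decay in \emph{all} the differences $|k_\ell-k_2|$, which makes the subsequent summation over $\mathbf{k}$ a one-line estimate.
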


To state our second result, we need to look carefully at the definition of the set (\ref{def:dj1}). Indeed, for each fixed $J \in \N$ and $t\in\R_+$, we denote by $m_{J,t}$ the integer part of the real number  $2^Jt-2^{J(1-a)}$, that is $m_{J,t}:=\lfloor 2^Jt-2^{J(1-a)} \rfloor$. In view of the definition \eqref{def:dj1} of the set $D_J^1(t)$, it turns out that
\begin{equation}
\label{rem:finiD1bis}
D_J^1(t)=\left\{ \begin{array}{cl}
                       \emptyset & \text{if } t \in [0, 2^{1-Ja})\\
                       D_j^1(m_{J,t}2^{-J}+2^{-Ja}) & \text{if } t \in [2^{1-Ja},\infty).
                     \end{array} \right.
\end{equation}
Then, we can derive from \eqref{rem:finiD1bis} that the process $\{S_{\mathbf{h},J}^{(d)} (t)\}_{t \in \R^+}$ is a piecewise constant random function, that is a random walk. More precisely, if $\mathcal{I}_J$ stands for the set
\[
\mathcal{I}_J := \N \cap (2 ^{J(1-a)}-1,+ \infty),
\]
one has
\begin{equation}
\label{eq:add1}
S_{\mathbf{h},J}^{(d)}(t) = \sum_{m \in \mathcal{I}_J}  s_{m,J}\mathbbm{1}_{\lambda_{m,J}^{(a)}}(t), \quad \mbox{for every $t\in\R_+$,}
\end{equation}
where, for all $m \in \mathcal{I}_J$, we set the random variable $s_{m,J} := S_{\mathbf{h},J}^{(d)}(m 2^{-J}+2^{-aJ})$ and the deterministic  bounded interval $\lambda_{m,J}^{(a)}:=[m 2^{-J}+2^{-aJ},(m+1) 2^{-J}+2^{-aJ})$.

In virtue of Theorem \ref{thm:main}, when $J$ is large enough, the random walk in \eqref{eq:add1} can be used for numerically simulating the generalized Hermite process $\{X^{(d)}_{\mathbf{h}}(t)\}_{t \in \R_+}$. Nevertheless, from Remark \ref{rmk:holder}, we know that sample paths of the latter process are almost surely continuous functions. Thus, one could prefer to simulate them through a numerically computable stochastic process having continuous sample paths. This can be simply done by taking a linear interpolation between the random points $(m 2^{-J}+2^{-aJ},s_{m,J})$. Namely, we consider the process $\{\widetilde{S}_{\mathbf{h},J}^{(d)}(t)\}_{t \in \R_+}$ defined, for all $t \in \R_+$, by
\begin{align}\label{eqn:interpolin}
&\widetilde{S}_{\mathbf{h},J}^{(d)}(t):=\frac{s_{m_0,J}}{|\lambda_{0,J}^{(a)}|} t \mathbbm{1}_{\lambda_{0,J}^{(a)}}(t)\\
&\hspace{1cm}+\sum_{m \in \mathcal{I}_J} \left(2^J\left(s_{m+1,J}-s_{m,J}\right)(t-(m 2^{-J}+2^{-aJ}))+ s_{m,J} \right) \mathbbm{1}_{\lambda_{m,J}}(t),\nonumber
\end{align}
with $m_0:= \inf \mathcal{I}_J$, $\lambda_{0,J}^{(a)}:= [0,m_02^{-J}+2^{-aJ})$ and $|\lambda_{m_0,J}^{(a)}|=m_02^{-J}+2^{-aJ}$.

The following theorem shows that the sequence of simulation processes $\big (\{\widetilde{S}_{\mathbf{h},J}^{(d)}(t)\}_{t \in \R_+}\big)_{J \in \N}$ almost surely converges uniformly on any compact interval of $\R_+$ to $\{X^{(d)}_{\mathbf{h}}(t)\}_{t \in \R_+}$, with the same rate of convergence as the one in Theorem \ref{thm:main}. 
%

\begin{Thm}\label{thm:main2}
If $a > 1-\frac{1}{2H}$, where $H\in (1/2,1)$ is as in \eqref{eq:add2}. Then, for any compact interval $I \subset \R_+$, there exists an almost surely finite random variable $C'$ (depending on $I$) for which one has, almost surely, for each $J \in \N$,
\[
\| X^{(d)}_{\mathbf{h}}-\widetilde{S}_{\mathbf{h},J}^{(d)} \|_{I,\infty}  \leq C' J^{\frac{d}{2}} 2^{-J(  h_1+ \cdots + h_d -d + 1/2)}.
\]
\end{Thm}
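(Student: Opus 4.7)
The plan is to combine Theorem \ref{thm:main} with the Hölder continuity of the sample paths (Remark \ref{rmk:holder}) via an auxiliary piecewise linear interpolant of $X^{(d)}_{\mathbf{h}}$ itself. Concretely, I define $\widetilde{X}^{(d)}_{\mathbf{h},J}$ by the same formula \eqref{eqn:interpolin} but with each $s_{m,J}$ replaced by the genuine sample value $X^{(d)}_{\mathbf{h}}(m2^{-J}+2^{-aJ})$. Since $X^{(d)}_{\mathbf{h}}(0)=0$, this object is exactly the continuous piecewise linear interpolant of $X^{(d)}_{\mathbf{h}}$ through the grid $\{m2^{-J}+2^{-aJ}\}_{m\in\{0\}\cup\mathcal{I}_J}$. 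The triangle inequality then reads
\[
\|X^{(d)}_{\mathbf{h}}-\widetilde{S}^{(d)}_{\mathbf{h},J}\|_{I,\infty}\leq \|X^{(d)}_{\mathbf{h}}-\widetilde{X}^{(d)}_{\mathbf{h},J}\|_{I,\infty}+\|\widetilde{X}^{(d)}_{\mathbf{h},J}-\widetilde{S}^{(d)}_{\mathbf{h},J}\|_{I,\infty},
\]
and I will treat the two terms separately, noting that the exponent in the target rate equals $h_1+\cdots+h_d-d+1/2=H-1/2$.

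The second term is the easier one. The difference $\widetilde{X}^{(d)}_{\mathbf{h},J}-\widetilde{S}^{(d)}_{\mathbf{h},J}$ is continuous and piecewise linear on a finite subdivision of $I$, so its sup norm is attained at a node. At each node $m2^{-J}+2^{-aJ}$ the value of the difference coincides with $X^{(d)}_{\mathbf{h}}(m2^{-J}+2^{-aJ})-S^{(d)}_{\mathbf{h},J}(m2^{-J}+2^{-aJ})$. All relevant nodes lie in a slightly enlarged compact interval $\widetilde{I}\supset I$, so Theorem \ref{thm:main} applied to $\widetilde{I}$ yields
\[
\|\widetilde{X}^{(d)}_{\mathbf{h},J}-\widetilde{S}^{(d)}_{\mathbf{h},J}\|_{I,\infty}\leq \|X^{(d)}_{\mathbf{h}}-S^{(d)}_{\mathbf{h},J}\|_{\widetilde{I},\infty}\leq C\,J^{d/2}\,2^{-J(H-1/2)},
\]
which already matches the desired rate.

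The first term is controlled by Hölder continuity. Fix $\gamma\in(0,H)$. Using the standard fact that the uniform error between a $\gamma$-Hölder function and its linear interpolant on an interval of length $L$ is bounded by twice the Hölder seminorm times $L^{\gamma}$, combined with \eqref{eqn:holderrmk}, one gets an error $\lesssim 2^{-J\gamma}$ on each interior interval $\lambda^{(a)}_{m,J}$ (which has length $2^{-J}$). The catch is the initial interval $\lambda^{(a)}_{0,J}$, whose length is of order $2^{-aJ}\gg 2^{-J}$, and on which the Hölder argument only yields $\lesssim 2^{-aJ\gamma}$. To dominate $J^{d/2}2^{-J(H-1/2)}$ we must choose $\gamma\in(0,H)$ with $a\gamma\geq H-1/2$. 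Since $\sup_{\gamma<H}a\gamma=aH$, such a $\gamma$ exists precisely when $aH>H-1/2$, i.e.\ $a>1-\frac{1}{2H}$, which is exactly the hypothesis of the theorem. Combining the two bounds then closes the proof.

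The main obstacle, therefore, is the macroscopic size of the first interval $\lambda^{(a)}_{0,J}$: everywhere else the grid has spacing $2^{-J}$ and Hölder continuity trivially beats the rate coming from Theorem \ref{thm:main}, but near the origin one only has spacing $2^{-aJ}$, and it is the threshold $a>1-\frac{1}{2H}$ that guarantees that Hölder regularity alone is strong enough to match the rate $J^{d/2}2^{-J(H-1/2)}$.
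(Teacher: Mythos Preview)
Your proof is correct and uses the same two ingredients as the paper (Theorem~\ref{thm:main} together with the H\"older estimate \eqref{eqn:holderrmk}), but the decomposition is organised differently. The paper inserts the piecewise constant process $S^{(d)}_{\mathbf{h},J}$ and bounds $\|S^{(d)}_{\mathbf{h},J}-\widetilde{S}^{(d)}_{\mathbf{h},J}\|_{I,\infty}$ by the maximal jump $\max\{|s_{m_0,J}|,\max_m|s_{m+1,J}-s_{m,J}|\}$, then splits each jump into a true increment of $X^{(d)}_{\mathbf{h}}$ plus twice the approximation error; this produces the final inequality $\|X^{(d)}_{\mathbf{h}}-\widetilde{S}^{(d)}_{\mathbf{h},J}\|_{I,\infty}\le 3\|X^{(d)}_{\mathbf{h}}-S^{(d)}_{\mathbf{h},J}\|_{I,\infty}+4C_{T,\gamma}J^{d/2}2^{-J(H-1/2)}$. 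You instead insert the piecewise linear interpolant $\widetilde{X}^{(d)}_{\mathbf{h},J}$ of the true process, so that the ``approximation'' piece $\|\widetilde{X}^{(d)}_{\mathbf{h},J}-\widetilde{S}^{(d)}_{\mathbf{h},J}\|_{I,\infty}$ is attained at a node and bounded directly by a single copy of $\|X^{(d)}_{\mathbf{h}}-S^{(d)}_{\mathbf{h},J}\|_{\widetilde{I},\infty}$, while the ``regularity'' piece $\|X^{(d)}_{\mathbf{h}}-\widetilde{X}^{(d)}_{\mathbf{h},J}\|_{I,\infty}$ is a pure interpolation error handled by H\"older continuity. Your route is marginally cleaner (better constants, no detour through the step function), but both proofs hinge on the same observation you correctly isolate: the only place the hypothesis $a>1-\tfrac{1}{2H}$ matters is the initial interval $\lambda^{(a)}_{0,J}$ of length $\sim 2^{-aJ}$, where one needs $a\gamma>H-\tfrac12$ for some admissible $\gamma<H$.
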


\section{Proofs of the main results} \label{sec:proofs}

\subsection{Proof of Theorem \ref{thm:main}}

Our strategy to prove Theorem \ref{thm:main} consists in writing, for all $J \in \N$, the approximation error $X^{(d)}_{\mathbf{h}}-S_{\mathbf{h},J}^{(d)}$  as the sum of random variables, indexed by the sets $(D_J^1(t))^d$, $\mathcal{J}_J^1(t)$, $\mathcal{J}_J^2(t)$ and $\mathcal{J}_J^3(t)$ and to bound the sup-norm of these random variables in a suitable way. On this purpose, let us first remark that \cite[Proposition 2.7]{ayachehamonierloosveldt} as well as \cite[Lemma 2.2.17]{theseyassine} allow to affirm the existence of $C^*$ a positive finite random variable and $\Omega^*$ an event of probability $1$, such that, on $\Omega^*$, for all $J \in \N$ and $\mathbf{k} \in \Z^d$, one has
\begin{equation}\label{eqn:boundonsigma}
|\sigma_{J,\mathbf{k}}^{(\mathbf{h})} | \leq C^* \prod_{\ell =1}^d \sqrt{\log(3+|J|+|k_\ell|)}.
\end{equation}

\begin{Lemma}\label{lemmeJ2}
Let $T >2$ be a fixed real number. There exits a positive almost surely finite random variables $C$ such that, for all $J \in \N$, on $\Omega^*$, the random variable
\begin{align*}
\mathcal{G}^2_{J} &:=  \sup_{t \in [0,T]} \left \{ 2^{-J(h_1+\ldots+h_d-d)} \sum_{\mathbf{k} \in \mathcal{J}_J^2(t)} |\sigma_{J,\textbf{k}}^{(\mathbf{h})}| \left|\int_0^t \prod_{\ell=1}^d \Phi_\Delta^{(h_\ell-1/2)}(2^J s-k_\ell) \, ds \right| \right\}
\end{align*}
is bounded from above by
$ C J^{\frac{d}{2}} \left(\log(3+J)\right)^{\frac{d-1}{2}} 2^{-J(h_1+ \cdots + h_d +a -d)}.$
\end{Lemma}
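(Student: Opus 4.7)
The strategy is to bound the quantity uniformly in $t \in [0,T]$ by exploiting two key facts: the Schwartz decay of the fractional scaling functions $\Phi_\Delta^{(h_\ell - 1/2)}$, which makes the $u$-integrals essentially local, and the smallness of the boundary set $D_J^2(t)$, namely $|D_J^2(t)| \leq C 2^{J(1-a)}$ (which follows directly from \eqref{def:bjk} and \eqref{def:dj2}). Performing the change of variable $u = 2^J s$ and taking absolute values yields
\[
\left|\int_0^t \prod_{\ell=1}^d \Phi_\Delta^{(h_\ell-1/2)}(2^J s - k_\ell)\, ds\right| \leq 2^{-J} \int_\R \prod_{\ell=1}^d |\Phi_\Delta^{(h_\ell-1/2)}(u - k_\ell)|\, du,
\]
a bound which is now independent of $t$. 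Combining with \eqref{eqn:boundonsigma} and the inclusion $\mathcal{J}_J^2(t) \subseteq \bigcup_{n=1}^d \{\mathbf{k} \in \Z^d : k_n \in D_J^2(t)\}$, it suffices (up to a factor $d$) to bound, for each $n$,
\[
2^{-J(h_1+\cdots+h_d-d+1)} \sum_{k_n \in D_J^2(t)} \sum_{k_\ell \in \Z,\, \ell \neq n} \prod_{\ell=1}^d \sqrt{\log(3+J+|k_\ell|)} \int_\R \prod_{\ell=1}^d |\Phi_\Delta^{(h_\ell-1/2)}(u-k_\ell)|\, du.
\]

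For the $d-1$ free summations, I would interchange summation and integration and invoke the auxiliary estimate
\[
\sum_{k \in \Z} \sqrt{\log(3+J+|k|)}\, |\Phi_\Delta^{(\delta)}(u-k)| \leq C\bigl(\sqrt{\log(3+J)} + \sqrt{\log(3+|u|)}\bigr),
\]
which follows from \eqref{maj:psih} by splitting the sum according to whether $|k-u|$ is bounded or large. The rapid decay then concentrates the remaining $u$-integral on $\{|u-k_n| \leq O(1)\}$, on which $\log(3+|u|) \leq C\log(3+|k_n|)$; hence the integral is bounded by $C(\log(3+J+|k_n|))^{(d-1)/2}$. Since $k_n \in D_J^2(t)$ with $t \in [0,T]$ forces $|k_n| \leq 2^J T + 2^{J(1-a)}$, this gives $(\log(3+J+|k_n|))^{(d-1)/2} \leq C J^{(d-1)/2}$ and $\sqrt{\log(3+J+|k_n|)} \leq C\sqrt{J}$.

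Summing over the $|D_J^2(t)| \leq C 2^{J(1-a)}$ choices of $k_n$ then yields $C J^{d/2} 2^{J(1-a)}$, and multiplying by the prefactor $2^{-J(h_1+\cdots+h_d-d+1)}$ gives $C J^{d/2} 2^{-J(h_1+\cdots+h_d+a-d)}$, which is sharper than (and so implies) the stated bound with the extra $(\log(3+J))^{(d-1)/2}$. Uniformity in $t \in [0,T]$ is automatic because all constants depend only on $T$. The main technical obstacle is the log-weighted summation estimate above: it requires splitting the sum carefully into regions $|k-u|$ bounded versus $|k| \gg |u|$ and applying Schwartz decay of sufficiently high order, together with $\log(a+b) \leq \log(3+a) + \log(3+b)$ for $a, b \geq 0$; this is standard but delicate bookkeeping.
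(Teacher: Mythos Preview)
Your approach is correct and follows essentially the same route as the paper: bound $|\sigma_{J,\mathbf{k}}^{(\mathbf{h})}|$ via \eqref{eqn:boundonsigma}, replace each $\Phi_\Delta^{(h_\ell-1/2)}$ by its Schwartz decay, decompose $\mathcal{J}_J^2(t)$ according to which coordinate lies in $D_J^2(t)$, sum the $d-1$ free indices, and then exploit $\#D_J^2(t)\le C\,2^{J(1-a)}$. The paper keeps the integral on $[0,T]$ in the original variable and defers the log-weighted summation to \cite[Lemma~4.6]{ayachehamonierloosveldt}, whereas you change variables, extend to $\int_\R$, and spell out the auxiliary estimate explicitly; these are cosmetic differences. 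One minor by-product of your more careful log bookkeeping is that you land on $C\,J^{d/2}\,2^{-J(h_1+\cdots+h_d+a-d)}$ without the extra factor $(\log(3+J))^{(d-1)/2}$, which is indeed a slight sharpening of the stated bound (the paper's looser form presumably matches the format of the cited external lemma).
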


\begin{proof}
Let $L>1$ be a fixed real number, for all $t \in [0,T]$, using the definition \eqref{def:JJ2}, the inequality \eqref{eqn:boundonsigma} and the fast decay property \eqref{maj:psih}, we have, on $\Omega^*$,
\begin{align*}
&2^{-J(h_1+\ldots+h_d-d)} \sum_{\mathbf{k} \in \mathcal{J}_J^2(t)} |\sigma_{J,\textbf{k}}^{(\mathbf{h})}| \left|\int_0^t \prod_{\ell=1}^d \Phi_\Delta^{(h_\ell-1/2)}(2^J s-k_\ell) \, ds \right| \\
& \quad \leq C_1\sup_{ n \in [\![1,d]\!]}  \left(2^{-J(h_1+\ldots+h_d-d)} \sum_{k_n \in D_J^2(t)} \sum_{\substack{k_{\ell} \in \Z \\  \ell \neq n} } \int_0^T \prod_{\ell=1}^d \frac{\sqrt{\log(3+J+|k_\ell|)}}{(3+|2^J s-k_\ell|)^L} \, ds \right),
\end{align*}
where $C_1$ is a positive finite random variable, not depending on $t$ or $J$. We proceed as in the proof of \cite[Lemma 4.6]{ayachehamonierloosveldt} to get, on $\Omega^*$,
\begin{align*}
| \mathcal{G}^2_{J} | & \leq C_2 2^{-J(h_1+ \cdots + h_d +a -d)} \sqrt{1+J} \log(3+J+2^J T)^\frac{d-1}{2} \\
& \leq C_3  2^{-J(h_1+ \cdots + h_d +a -d)} J^{\frac{d}{2}} \log(3+J)^\frac{d-1}{2},
\end{align*}
where $C_2$ and $C_3$ are positive finite random variables, not depending on $J$.
\end{proof}

\begin{Lemma}\label{lemmeJ3}
Let $T >2$ and $L \geq 2^{-1}(1-a)^{-1}+1$ be two fixed real numbers. There exits a positive almost surely finite random variables $C$ such that, for all $J \in \N$, on $\Omega^*$, the random variable
\begin{align*}
\mathcal{G}^3_{J} &:=  \sup_{t \in [0,T]} \left \{ 2^{-J(h_1+\ldots+h_d-d)} \sum_{\mathbf{k} \in \mathcal{J}_J^3(t)} |\sigma_{J,\textbf{k}}^{(\mathbf{h})}| \left|\int_0^t \prod_{\ell=1}^d \Phi_\Delta^{(h_\ell-1/2)}(2^J s-k_\ell) \, ds \right| \right\}
\end{align*}
is bounded from above by
$ C J^{\frac{d+1}{2}} \left(\log(3+J)\right)^{\frac{d-1}{2}} 2^{-J(h_1+ \cdots + h_d +(L-1)(1-a) -d)}.$
\end{Lemma}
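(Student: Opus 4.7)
The argument closely parallels that of Lemma~\ref{lemmeJ2}, the new ingredient being the exploitation of the pointwise lower bound $|2^J s-k_n|\ge 2^{J(1-a)}$, which holds for every $s\in[0,t]$ as soon as $\mathbf{k}\in\mathcal{J}_J^3(t)$ and $k_n\in D_J^3(t)$; this is what delivers the additional factor $2^{-J(L-1)(1-a)}$.

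Starting from~\eqref{eqn:boundonsigma} and the rapid decay~\eqref{maj:psih} applied with the polynomial exponent $L$, and using the union bound $\mathcal{J}_J^3(t)\subseteq\bigcup_{n=1}^{d}\{\mathbf{k}\in\Z^d:k_n\in D_J^3(t)\}$, the problem reduces to estimating, up to the prefactor $2^{-J(h_1+\cdots+h_d-d)}$, the supremum over $t\in[0,T]$ and $n\in[\![1,d]\!]$ of
\[
\int_0^{T}\Bigl(\sum_{k_n\in D_J^3(t)}\frac{\sqrt{\log(3+J+|k_n|)}}{(3+|2^J s-k_n|)^L}\Bigr)\prod_{\ell\ne n}\Bigl(\sum_{k_\ell\in\Z}\frac{\sqrt{\log(3+J+|k_\ell|)}}{(3+|2^J s-k_\ell|)^L}\Bigr)\,ds.
\]
For each of the $d-1$ unrestricted coordinates $\ell\ne n$, the same argument as in Lemma~\ref{lemmeJ2}, that is splitting $\sqrt{\log(3+J+|k_\ell|)}\le C(\sqrt{\log(3+J+2^J T)}+\sqrt{\log(3+|k_\ell-2^J s|)})$ and using $L>1$, bounds each inner sum by $C\sqrt{\log(3+J+2^J T)}$, and contributes the factor $(\log(3+J+2^J T))^{(d-1)/2}$.

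The core new step is the $k_n$ sum. After swapping summation and integration, I use the monotonicity observation that, for any $k_n$ with $k_n 2^{-J}\notin[0,t]$, the map $s\mapsto(3+|2^J s-k_n|)^{-L}$ is monotone on $[0,t]$ and attains its maximum at the endpoint of $[0,t]$ closest to $k_n 2^{-J}$; hence
\[
\int_0^{t}(3+|2^J s-k_n|)^{-L}\,ds\le T\,\bigl(3+\dist(k_n,[0,2^J t])\bigr)^{-L}.
\]
Setting $m:=\dist(k_n,[0,2^J t])\ge 2^{J(1-a)}$, using the elementary inequality $3+J+|k_n|\le(3+J+2^J T)(3+m)$ to split $\sqrt{\log(3+J+|k_n|)}\le\sqrt{\log(3+J+2^J T)}+\sqrt{\log(3+m)}$, and applying the tail bound $\sum_{m\ge N}(3+m)^{-L}\le(L-1)^{-1}N^{-(L-1)}$ (valid as soon as $L>1$, a condition strictly weaker than the hypothesis $L\ge\tfrac12(1-a)^{-1}+1$; the latter is only needed later in Theorem~\ref{thm:main} to guarantee $(L-1)(1-a)\ge 1/2$), the $k_n$ sum is bounded by $C\sqrt{\log(3+J+2^J T)}\,2^{-J(L-1)(1-a)}$.

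Combining the three estimates and reinstating the prefactor yields a bound of the form $\mathcal{G}^3_J\le C\,(\log(3+J+2^J T))^{d/2}\,2^{-J(h_1+\cdots+h_d-d+(L-1)(1-a))}$, and the announced conclusion then follows from the elementary majoration $\log(3+J+2^J T)\le \log(3+J)+J\log 2+\log T\le C\,J\log(3+J)$, which gives $(\log(3+J+2^J T))^{d/2}\le C\,J^{(d+1)/2}(\log(3+J))^{(d-1)/2}$. The principal obstacle is the extraction of the sharp exponent $(L-1)(1-a)$ in the $k_n$ sum: the crude approach of inserting $|2^J s-k_n|\ge 2^{J(1-a)}$ directly inside the integral and then summing over $k_n$ only produces $(L-2)(1-a)$; the optimal exponent $(L-1)(1-a)$ is recovered only via the monotonicity trick that bounds the integral by $T$ times the endpoint value, combined with the geometric-series tail estimate.
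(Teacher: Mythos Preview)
Your argument is correct and matches the paper's approach: the paper sets up the identical reduction via the union bound and the decay estimate~\eqref{maj:psih}, and then simply defers the remaining one-variable estimate to \cite[Lemma~4.5]{ayachehamonierloosveldt}, whereas you have written out those details explicitly (and in fact obtain the slightly sharper intermediate factor $\sqrt{\log(3+J+2^JT)}$ in place of the paper's $(1+J)$). Two small points: first, keep the integration domain as $[0,t]$ rather than $[0,T]$ in your displayed bound (the paper makes the same slip), since your monotonicity estimate $\int_0^t(\cdots)\,ds\le T(3+m)^{-L}$ genuinely needs $s\le t$ --- for $k_n\in(2^Jt+2^{J(1-a)},2^JT]$ and $s$ near $k_n2^{-J}\in(t,T]$ the integrand is not small; second, your closing remark overstates the role of the monotonicity trick, because summing over $k_n\in D_J^3(t)$ directly inside the $s$-integral (using that for fixed $s\in[0,t]$ the values $|2^Js-k_n|$ are essentially distinct integers all at least $2^{J(1-a)}$) already yields $\sum_{m\ge 2^{J(1-a)}}(3+m)^{-L}\le C\,2^{-J(L-1)(1-a)}$, so the sharp exponent does not actually hinge on that device.
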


\begin{proof}
For all $t \in [0,T]$, using the definition \eqref{def:JJ2}, the inequality \eqref{eqn:boundonsigma} and the fast decay property \eqref{maj:psih}, we have, on $\Omega^*$,
\begin{align*}
& 2^{-J(h_1+\ldots+h_d-d)} \sum_{\mathbf{k} \in \mathcal{J}_J^3(t)} |\sigma_{J,\textbf{k}}^{(\mathbf{h})}| \left|\int_0^t \prod_{\ell=1}^d \Phi_\Delta^{(h_\ell-1/2)}(2^J s-k_\ell) \, ds \right| \\
& \quad \leq C_1\sup_{ n \in [\![1,d]\!]}  \left(2^{-J(h_1+\ldots+h_d-d)} \sum_{k_n \in D_J^3(t)} \sum_{\substack{k_{\ell} \in \Z \\  \ell \neq n} } \int_0^T \prod_{\ell=1}^d \frac{\sqrt{\log(3+J+|k_\ell|)}}{(3+|2^J s-k_\ell|)^L} \, ds \right),
\end{align*}
where $C_1$ is a positive finite random variable, not depending on $t$ or $J$. We proceed as in the proof of \cite[Lemma 4.5]{ayachehamonierloosveldt} to get, on $\Omega^*$,
\begin{align*}
| \mathcal{G}^3_{J} | & \leq C_2 (1+J) \log(3+J+2^J T)^\frac{d-1}{2}  2^{-J(h_1+ \cdots + h_d +(L-1)(1-a) -d)} \\
& \leq C_3  J^{\frac{d+1}{2}}  \log(3+J)^\frac{d-1}{2}  2^{-J(h_1+ \cdots + h_d +(L-1)(1-a) -d)},
\end{align*}
where $C_2$ and $C_3$ are positive finite random variables, not depending on $J$.
\end{proof}

\begin{Lemma}\label{lemmaint}
Let $T >2$ and $L>2$ be two fixed real numbers. There exits a positive almost surely finite random variable $C$ such that, for all $J \in \N$, on $\Omega^*$, the random variable
\begin{align*}
\mathcal{G}^1_{J} &:=  \sup_{t \in [0,T]} \left \{ 2^{-J(h_1+\ldots+h_d-d)} \sum_{\mathbf{k} \in (D_J^1(t))^d} |\sigma_{J,\textbf{k}}^{(\mathbf{h})}| \left|\int_{\R \setminus [0,t]} \prod_{\ell=1}^d \Phi_\Delta^{(h_\ell-1/2)}(2^J s-k_\ell) \, ds \right| \right\}
\end{align*}
is bounded from above by $C J^{\frac{d}{2}}\, 2^{-J(d(L-1)(1-a)+h_1+ \cdots + h_d  -d+1)}$.
\end{Lemma}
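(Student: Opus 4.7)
The plan is to bound the sum-integral defining $\mathcal{G}^1_J$ by applying the rapid Schwartz decay of $\Phi_\Delta^{(h_\ell-1/2)}$ from \eqref{maj:psih}, exploiting the geometric fact that for $\mathbf{k} \in (D_J^1(t))^d$ and $s \notin [0, t]$ the argument $|2^J s - k_\ell|$ is bounded below by $2^{J(1-a)}$ plus $2^J\dist(s, [0, t])$, and then carefully exchanging sum and integral so that the decay can be used to extract a factor of $2^{-J(L-1)(1-a)}$ from each of the $d$ indices simultaneously.

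First, I would perform the change of variable $u = 2^J s$ in the inner integral, which turns it into $2^{-J}\int_{\R \setminus [0, 2^J t]}\prod_{\ell=1}^d \Phi_\Delta^{(h_\ell-1/2)}(u-k_\ell)\,du$. Using \eqref{maj:psih}, each factor is bounded by $C_L(3 + |u - k_\ell|)^{-L}$. For $\mathbf{k} \in (D_J^1(t))^d$ every $k_\ell$ lies in $[2^{J(1-a)},\, 2^J t - 2^{J(1-a)}]$, hence
\[
|u - k_\ell| \geq 2^{J(1-a)} + v, \qquad \text{where } v := \dist(u, [0, 2^J t]),
\]
whenever $u \notin [0, 2^J t]$. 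On $\Omega^*$, combining \eqref{eqn:boundonsigma} with the fact $|k_\ell| \leq 2^J T$ gives $\sqrt{\log(3+J+|k_\ell|)} = O(\sqrt{J})$, producing the global $J^{d/2}$ factor.

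Next, Fubini allows one to bring the sum over $\mathbf{k}$ inside the $u$-integral, where it factorizes as $\prod_\ell \sum_{k_\ell \in D_J^1(t)}(3+|u-k_\ell|)^{-L}$. For each $\ell$, since the values $|u - k_\ell|$ run through consecutive integers bounded below by $2^{J(1-a)} + v$, a standard integral comparison yields $\sum_{k_\ell}(3+|u-k_\ell|)^{-L} \leq C(3 + v + 2^{J(1-a)})^{-(L-1)}$, so the $d$-fold product is bounded by $C^d(3+v+2^{J(1-a)})^{-d(L-1)}$. Parameterizing $u \in \R \setminus [0, 2^J t]$ by $v \in [0, +\infty)$ (with a factor of $2$ for the two connected components), the $v$-integral evaluates to a constant multiple of $2^{-J(1-a)(d(L-1)-1)}$, which is valid thanks to the hypothesis $L > 2$ ensuring $d(L-1) > 1$. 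Multiplying by the $2^{-J}$ coming from the change of variable and by the prefactor $2^{-J(h_1+\cdots+h_d-d)}$ yields the announced estimate.

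The main obstacle is precisely this exchange of summation and integration: a crude supremum of the integral over $\mathbf{k}$ followed by summation on $(D_J^1(t))^d$ would cost a factor $2^{Jd}$ coming from the cardinality of this set, which cannot be absorbed by the Schwartz decay alone. Summing first for fixed $u$ exploits the lower bound $|u - k_\ell| \geq 2^{J(1-a)} + v$ simultaneously for all $d$ indices and delivers the $d$-fold factor of $2^{-J(1-a)(L-1)}$; only afterwards can one integrate in $u$ over the exterior of $[0, 2^J t]$ to obtain the final estimate. This argument is the natural analog, for the exterior geometry $s \notin [0, t]$, of the reasoning used in the proofs of Lemmas~4.5 and~4.6 of \cite{ayachehamonierloosveldt}, which is why we can reuse the same bookkeeping of $\sigma$-factors and log weights established there.
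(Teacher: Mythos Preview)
Your proposal is correct and follows essentially the same route as the paper's proof: both bring the sum over $\mathbf{k}$ inside the outer integral via Fubini, factorize into a product over $\ell$, exploit the lower bound $|2^J s - k_\ell| \geq 2^{J(1-a)} + 2^J\dist(s,[0,t])$ coming from $k_\ell\in D_J^1(t)$, replace each resulting geometric sum by an integral comparison, and then integrate over the exterior of $[0,t]$. The only cosmetic differences are that the paper splits the exterior into the two pieces $(t,+\infty)$ and $(-\infty,0)$ rather than parameterizing by $v=\dist$, and performs the dyadic change of variable $y=2^J(s-t)$ at the end rather than $u=2^Js$ at the start.
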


\begin{proof}
For all $t \in [0,T]$, using the inequality \eqref{eqn:boundonsigma}, the fast decay property \eqref{maj:psih}, the inequality $|k| \leq 2^J T$, for all $k \in D_J^1(t)$, the inequality $2^JT \geq J$, \cite[Lemma B.2]{ayachehamonierloosveldt} and the definition \eqref{def:dj1}, we have, on $\Omega^*$,
\begin{align*}
 & 2^{-J(h_1+\ldots+h_d-d)} \sum_{\mathbf{k} \in (D_J^1(t))^d} |\sigma_{J,\textbf{k}}^{(\mathbf{h})}| \left|\int_{\R \setminus [0,t]} \prod_{\ell=1}^d \Phi_\Delta^{(h_\ell-1/2)}(2^J s-k_\ell) \, ds \right|  \\
 & \leq \quad C_1 2^{-J(h_1+\ldots+h_d-d)} \int_{\R \setminus [0,t]} \prod_{\ell=1}^d \left(\sum_{k_\ell \in D_J^1(t)}\frac{\sqrt{\log(3+J+|k_\ell|)}}{(3+|2^Js-k_\ell|)^L} \right) \, ds \\
  & \leq \quad  C_1 2^{-J(h_1+\ldots+h_d-d)} \int_{\R \setminus [0,t]} \prod_{\ell=1}^d \left(\sum_{k_\ell \in D_J^1(t)}\frac{\sqrt{\log(3+2^{J+1}T)}}{(3+|2^Js-k_\ell|)^L} \right) \, ds \\
 & \leq \quad C_2 J^\frac{d}{2} 2^{-J(h_1+\ldots+h_d-d)} \int_{\R \setminus [0,t]} \prod_{\ell=1}^d \left(\sum_{k_\ell \in D_J^1(t)}\frac{1}{(3+|2^Js-k_\ell|)^L} \right) \, ds \\
 & \leq \quad C_2 J^\frac{d}{2} 2^{-J(h_1+\ldots+h_d-d)}  (I_J^1(t) + I_J^2(t)),
\end{align*}
where $C_1$ and $C_2$ are positive finite random variable, not depending on $t$ or $J$, and we have set
\[ I_J^1(t) := \int_{t}^{+ \infty} \prod_{\ell=1}^d \left( \sum_{k_\ell \leq 2^J t-2^{J(1-a)}} \frac{1}{(3+2^Js-k_\ell)^L} \right) \, ds \]
and
\[ I_J^2(t) := \int_{-\infty}^{0} \prod_{\ell=1}^d \left( \sum_{k_\ell \geq 2^{J(1-a)}} \frac{1}{(3+k_\ell-2^J s)^L} \right) \, ds .\]

To bound $I_J^1(t)$, we use the change of variable $y=2^J(s-t)$ and the fact that the function $y \mapsto (2+y)^{-L}$ is decreasing on $\R_+$ to get
\begin{align*}
I_J^1(t) &= 2^{-J} \int_0^{+ \infty}  \prod_{\ell=1}^d \left( \sum_{k_\ell \leq 2^J t-2^{J(1-a)}} \frac{1}{(3y++2^Jt-k_\ell)^L} \right) \, dy \\
& \leq 2^{-J} \int_0^{+ \infty}  \prod_{\ell=1}^d \left( \int_0^{+ \infty} \frac{dz}{(2+y+2^{J(1-a)}+z)^L} \right) \, dy \\
& =2^{-J} \int_0^{+ \infty} \frac{(L-1)^d}{(2+y+2^{J(1-a)})^{d(L-1)}} \, dy \\
&= 2^{-J} (L-1)^d (d(L-1)-1) (2+2^{J(1-a)})^{-(d(L-1)-1)}.
\end{align*}
We bound $I_J^2(t)$ in the same way and get the conclusion.
\end{proof}

To get the proof of Theorem \ref{thm:main} done, it only remains us to bound the random variables
\[ 2^{-J(h_1+\ldots+h_d-d+1)} \sum_{\mathbf{k} \in (D_J^1(t))^d \setminus \mathcal{J}_J^1(t)} \sigma_{J,\textbf{k}}^{(\mathbf{h})} \int_{\R} \prod_{\ell=1}^d \Phi_\Delta^{(h_\ell-1/2)}(s-k_\ell) \, ds.\]
Let us then consider $\mathbf{k} \in (D_J^1(t))^d \setminus \mathcal{J}_J^1(t)$, up to a permutation of the indices, one can assume $|k_1-k_2| > 2^{\varepsilon J}$. Using the change of variable $s=x+k_2$, we write
\[ \int_{\R} \prod_{\ell=1}^d \Phi_\Delta^{(h_\ell-1/2)}(s-k_\ell) \, ds = \int_{\R} \Phi_\Delta^{(h_1-1/2)}(x+k_2-k_1)\Phi_\Delta^{(h_2-1/2)}(x) \prod_{\ell=3}^d \Phi_\Delta^{(h_\ell-1/2)}(x+k_2-k_\ell) \, dx. \]
We use the Parseval-Plancherel identity and the equality $\widehat{f g}= \widehat{f} \star \widehat{g}$, for all $f,g \in L^2$, to get
\begin{align}
\int_{\R} & \Phi_\Delta^{(h_1-1/2)}(x+k_2-k_1)\Phi_\Delta^{(h_2-1/2)}(x) \prod_{\ell=3}^d \Phi_\Delta^{(h_\ell-1/2)}(x+k_2-k_\ell) \, dx \nonumber\\ & = (2\pi)^{-d}\int_{\R} e^{i(k_2-k_1) \xi} \reallywidehat{\Phi_\Delta^{(h_1-1/2)}}(\xi) \overline{\left(\reallywidehat{\Phi_\Delta^{(h_2-1/2)} \prod_{\ell=3}^d \Phi_\Delta^{(h_\ell-1/2)}(\cdot+k_2-k_\ell) }\right)(\xi)} \, d\xi  \nonumber \\
&= (2\pi)^{-d}  \int_{\R} e^{i(k_2-k_1) \xi} \reallywidehat{\Phi_\Delta^{(h_1-1/2)}}(\xi) \nonumber \\
&\hspace{2cm}\times\overline{\left(\reallywidehat{\Phi_\Delta^{(h_2-1/2)}} \star  e^{i(k_2-k_3) \cdot}\reallywidehat{\Phi_\Delta^{(h_3-1/2)}} \star \cdots \star e^{i(k_2-k_d)\cdot}\reallywidehat{\Phi_\Delta^{(h_d-1/2)}}\right)(\xi)} \, d\xi \nonumber\\
& = \int_{\R^{d-1}} e^{i(k_2-k_1) \xi} e^{i(k_3-k_2) \eta_3} \cdots e^{i(k_d-k_2) \eta_d} F (\xi,\eta_3,\dots,\eta_d) \, d\xi d\eta_3 \cdots d\eta_d, \label{prodscalfuncenfourier}
\end{align}
where we put
\[F \, : \, (\xi,\eta_3,\dots,\eta_d) \mapsto (2\pi)^{-d}\,\reallywidehat{\Phi_\Delta^{(h_1-1/2)}}(\xi) \overline{\reallywidehat{\Phi_\Delta^{(h_2-1/2)}}}(\xi-(\eta_3+\cdots+\eta_d))  \prod_{\ell=3}^d \overline{\reallywidehat{\Phi_\Delta^{(h_\ell-1/2)}}}(\eta_\ell).  \]
But, as $F \in \mathcal{S}(\R^{d-1})$, we conclude that its Fourier transform
\[ \mathcal{F} \, : \, \mathbf{x} \mapsto \int_{\R^{d-1}} e^{i \langle (x_1,\dots,x_{d-1}),(\xi,\eta_3,\dots,\eta_d) \rangle}  F (\xi,\eta_3,\dots,\eta_d) \, d\xi d\eta_3 \cdots d\eta_d \]
also belongs to the space $\mathcal{S}(\R^{d-1})$. Therefore, for all $L>0$, there exists a deterministic constant $c_L>0$ such that
\[ \sup_{\mathbf{x} \in \R^{d-1}} (1+|x_1|)^L \dots (1+|x_{d-1}|)^L |\mathcal{F}(\mathbf{x})| < c_L.\]
In particular, as we assume $|k_1-k_2| > 2^{\varepsilon J}$, we get
\begin{align}
\left| \int_{\R} \prod_{\ell=1}^d \Phi_\Delta^{(h_\ell-1/2)}(s-k_\ell) \, ds \right| & \leq c_L \prod_{\ell=1, \ell \neq 2}^d \frac{1}{(1+|k_\ell-k_2|)^{L+1}} \nonumber \\
& \leq c_L 2^{- \varepsilon J L} \prod_{\ell=1, \ell \neq 2}^d \frac{1}{(1+|k_\ell-k_2|)^{L}}. \label{eqn:clef}
\end{align}

\begin{Lemma}\label{lemm:final}
Let $T >2$ and $L>\max\{(2 \varepsilon)^{-1},1\}$ be two fixed real numbers. There exits a positive almost surely finite random variable $C$ such that, for all $J \in \N$, on $\Omega^*$, the random variable
\[ \mathcal{G}_J:= \sup_{t \in [0,T]} \left\{ 2^{-J(h_1+\ldots+h_d-d+1)} \sum_{\mathbf{k} \in (D_J^1(t))^d \setminus \mathcal{J}_J^1(t)} |\sigma_{J,\textbf{k}}^{(\mathbf{h})}| \left| \int_{\R} \prod_{\ell=1}^d \Phi_\Delta^{(h_\ell-1/2)}(s-k_\ell) \, ds \right| \right\}\]
is bounded by $C J^{\frac{d}{2}} 2^{-J(h_1+\ldots+h_d-d +\varepsilon L)}$
\end{Lemma}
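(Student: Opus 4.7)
The plan is to leverage the already-established pointwise estimate \eqref{eqn:clef} together with the logarithmic coefficient bound \eqref{eqn:boundonsigma}, and to carry out the ensuing summations. First I would reduce to a single distinguished pair of indices. By definition, $\mathbf{k}\in (D_J^1(t))^d\setminus \mathcal{J}_J^1(t)$ forces the existence of $\ell\neq \ell'$ with $|k_\ell-k_{\ell'}|>2^{\varepsilon J}$; a union bound over the $\binom{d}{2}$ admissible unordered pairs, combined with the fact that the computation leading to \eqref{eqn:clef} applies verbatim for any choice of distinguished pair (the product $\prod_\ell \Phi_\Delta^{(h_\ell-1/2)}(s-k_\ell)$ may be regrouped around any fixed translation center $k_{\ell_0}$, and the same Fourier/Parseval manipulation yields the analogue of \eqref{eqn:clef} relative to that pair), shows that it suffices to estimate the same sum restricted to indices with $|k_1-k_2|>2^{\varepsilon J}$ and multiply the resulting bound by $\binom{d}{2}$.

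Next, on the event $\Omega^*$ and for every $\mathbf{k}\in (D_J^1(t))^d$ with $t\in[0,T]$, the inclusion $D_J^1(t)\subseteq [-1,T2^J]$ yields the uniform estimate $\log(3+J+|k_\ell|)\leq C\log(3+J+T2^J)\leq C' J$, and therefore \eqref{eqn:boundonsigma} gives $|\sigma_{J,\mathbf{k}}^{(\mathbf{h})}|\leq C'' J^{d/2}$. Inserting this together with \eqref{eqn:clef}, the restricted sum is controlled by
\[
C''\, c_L\, J^{d/2}\, 2^{-\varepsilon J L}\sum_{k_2\in D_J^1(t)}\prod_{\ell\neq 2}\sum_{k_\ell\in\Z}\frac{1}{(1+|k_\ell-k_2|)^L}.
\]
Since $L>1$, each of the $d-1$ inner sums is bounded by a constant $\kappa_L$ independent of $k_2$, while the outer sum is handled by the elementary counting $|D_J^1(t)|\leq T2^J+1\leq C\,2^J$.

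Combining these bounds produces $\widetilde{C} J^{d/2} 2^{-\varepsilon J L+J}$, uniformly in $t\in[0,T]$. Multiplying by the prefactor $2^{-J(h_1+\cdots+h_d-d+1)}$ in the definition of $\mathcal{G}_J$ and taking the supremum over $t\in[0,T]$ yields exactly the announced bound $C J^{d/2} 2^{-J(h_1+\cdots+h_d-d+\varepsilon L)}$. The genuinely hard work—the Fourier-side cancellation that converts the separation $|k_1-k_2|>2^{\varepsilon J}$ into the decaying factor $2^{-\varepsilon J L}$—has already been performed in deriving \eqref{eqn:clef}; hence no substantial new obstacle arises, and the only mild care needed is to verify that the symmetrization across all $\binom{d}{2}$ potential pairs goes through despite the coordinates of $\mathbf{h}$ being distinct, which follows directly from the symmetric structure of the integral.
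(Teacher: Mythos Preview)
Your proof is correct and follows essentially the same route as the paper's: both arguments combine the coefficient bound \eqref{eqn:boundonsigma} (reduced to $C J^{d/2}$ via $|k_\ell|\le 2^J T$) with the Fourier estimate \eqref{eqn:clef}, then sum $\#(D_J^1(t))\le C\,2^J$ over the ``center'' index and bound the remaining $d-1$ convergent series by a constant. The only difference is cosmetic: the paper derives \eqref{eqn:clef} after writing ``up to a permutation of the indices, one can assume $|k_1-k_2|>2^{\varepsilon J}$'', whereas you make the union bound over the $\binom{d}{2}$ pairs explicit and note that the Fourier computation leading to \eqref{eqn:clef} is invariant under that relabeling.
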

\begin{proof}
For all $t \in [0,T]$, using the inequality \eqref{eqn:boundonsigma}, the inequality \eqref{eqn:clef} and the fact that for all $k \in D_J(t)$, $|k| \leq 2^J T$, we get
\begin{align*}
 & 2^{-J(h_1+\ldots+h_d-d+1)} \sum_{\mathbf{k} \in (D_J^1(t))^d \setminus \mathcal{J}_J^1(t)} |\sigma_{J,\textbf{k}}^{(\mathbf{h})}| \left| \int_{\R} \prod_{\ell=1}^d \Phi_\Delta^{(h_\ell-1/2)}(s-k_\ell) \, ds \right| \\
& \quad\leq C_1  2^{-J(h_1+\ldots+h_d-d+1)} (\log(3+J+2^JT))^\frac{d}{2} 2^{- \varepsilon J L} (\#(D_J^1(t)) \sup_{x \in \R} \left( \sum_{k \in \Z} \frac{1}{(1+|k-x|)^L} \right)^{d-1} \\
& \quad \leq C_2 J^{\frac{d}{2}} 2^{-J(h_1+\ldots+h_d-d +\varepsilon L)},
\end{align*}
where $C_1$ and $C_2$ are positive finite random variable, not depending on $t$ or $J$.
\end{proof}

The proof of Theorem \ref{thm:main} is then a direct consequence of the Lemmata in this section and Theorem \ref{thm:mainfirstpaper}.

\begin{proof}[Proof of Theorem \ref{thm:main}]
Without loss of generality, one can assume that the compact interval $I$ in the statement of the theorem is of the form $I=[0,T]$ for a fixed real number $T>2$. Of course, for all $J \in \N$, we have
\begin{align*}
\| X^{(d)}_{\mathbf{h}}-S_{\mathbf{h},J}^{(d)} \|_{I,\infty} & \leq \| X^{(d)}_{\mathbf{h}}-X_{\mathbf{h},J}^{(d)} \|_{I,\infty} + \| X_{\mathbf{h},J}^{(d)} -S_{\mathbf{h},J}^{(d)} \|_{I,\infty} \\
& \leq  \| X^{(d)}_{\mathbf{h}}-X_{\mathbf{h},J}^{(d)} \|_{I,\infty}  + \mathcal{G}^1_{J}+ \mathcal{G}^2_{J} + \mathcal{G}^3_{J} + \mathcal{G}_J
\end{align*}
and we conclude using Theorem \ref{thm:mainfirstpaper} and Lemmata \ref{lemmeJ2}, \ref{lemmeJ3}, \ref{lemmaint} and \ref{lemm:final}.
\end{proof}

\subsection{Proof of Theorem \ref{thm:main2}}

We have enough materials to directly prove Theorem \ref{thm:main2}.

\begin{proof}[Proof of Theorem \ref{thm:main2}]
For the sake of simpleness in notation, let us assume $I=[0,T]$, for some $T>0$. For all $J \in \N$, we write
\[ \mathcal{I}_J(T) := \N \cap (2 ^{J(1-a)}-1,2^J T-2^{J(1-a)}].\]
For all such a $J$, we get, using the triangle inequality,
\begin{align*}
 \| {S}_{\mathbf{h},J}^{(d)}-\widetilde{S}_{\mathbf{h},J}^{(d)} \|_{I,\infty} & \leq \max \left \{ |s_{m_0,J}| ,\max_{m \in \mathcal{I}_J(T)} \left |s_{m+1,J}-s_{m,J} \right | \right \} \\
& \leq 2 \| X^{(d)}_{\mathbf{h}}-S_{\mathbf{h},J}^{(d)} \|_{I,\infty}  +  \max \left \{ A_J , B_J\right \} 
\end{align*}
where we set
\[A_J := |X^{(d)}_{\mathbf{h}}(m_02^{-J}+2^{-aJ})-X^{(d)}_{\mathbf{h}}(0)| \]
and
\[B_J := \max_{m \in \mathcal{I}_J(T)} \left |X^{(d)}_{\mathbf{h}}((m+1)2^{-J}+2^{-aJ})-X^{(d)}_{\mathbf{h}}(m2^{-J}+2^{-aJ}) \right |. \]
Let us consider $H-\frac{1}{2} < \gamma < H$ such that $a \gamma> H-\frac{1}{2}$. For all $m \in \mathcal{I}_J(T)$, we use the inequality \eqref{eqn:holderrmk} to find a positive finite random variable $C_{T,\gamma}$ such that, almost surely,
\[ \left |X^{(d)}_{\mathbf{h}}((m+1)2^{-J}+2^{-aJ})-X^{(d)}_{\mathbf{h}}(m2^{-J}+2^{-aJ}) \right | \leq C_{T,\gamma} 2^{-\gamma J}  \]
and thus, as $\gamma > H-\frac{1}{2}=h_1+ \cdots + h_d -d + 1/2$, we deduce
\begin{equation}\label{eqn:conditisura}
B_J \leq C_{T,\gamma} J^{\frac{d}{2}} 2^{-J(  h_1+ \cdots + h_d -d + 1/2)}.
\end{equation}

We also use the inequality\eqref{eqn:holderrmk} to get, as $0 < m_0 \leq 2^{J(1-a)}$,
\begin{align*}
 |X^{(d)}_{\mathbf{h}}(m_02^{-J}+2^{-aJ})-X^{(d)}_{\mathbf{h}}(0)| & \leq C_\gamma |m_02^{-J}+2^{-aJ}|^\gamma \\
 & \leq 2^\gamma C_{T,\gamma} 2^{-a \gamma J}.
\end{align*}
Then, as we choose $\gamma$ such that $a \gamma> H-\frac{1}{2}$, we get
\[B_J \leq  2^\gamma C_{T,\gamma} J^{\frac{d}{2}} 2^{-J(  h_1+ \cdots + h_d -d + 1/2)}.\]
Thus, we obtain, using the triangle inequality,
\[\| X^{(d)}_{\mathbf{h}}-\widetilde{S}_{\mathbf{h},J}^{(d)} \|_{I,\infty} \leq 3 \| X^{(d)}_{\mathbf{h}}-S_{\mathbf{h},J}^{(d)} \|_{I,\infty} + 4 C_{T,\gamma} J^{\frac{d}{2}} 2^{-J(  h_1+ \cdots + h_d -d + 1/2)}  \]
and the conclusion follows from inequality \eqref{eqn:thm:main1}.
\end{proof}

\begin{Rmk}
In the last Theorem, the condition $a > 1-\frac{1}{2H}$ is only required to obtain the inequality \eqref{eqn:conditisura}. In particular, if $\frac{1}{2}<a<1$ and $I$ is a compact interval included in $[m_02^{-J}+2^{-aJ},+\infty)$, one can find an almost surely finite random variable $C''$ (depending on $I$ but not $J$) for which
\[
\| X^{(d)}_{\mathbf{h}}-\widetilde{S}_{\mathbf{h},J}^{(d)} \|_{I,\infty}  \leq C' J^{\frac{d}{2}} 2^{-J(  h_1+ \cdots + h_d -d + 1/2)},
\]
without any additional assumptions on the value of the parameter $a$.
\end{Rmk}

\section{Examples} \label{sec:examples}

We use the process defined in \eqref{eqn:interpolin} to numerically simulate the Rosenblatt process and the Hermite process of order 3. We refer to \cite[Chap. 4]{Pipiras_Taqqu_2017} for more information on these processes.

\subsection{Simulation of the Rosenblatt process}

The Rosenblatt process of Hurst parameter $H \in (1/2,1)$ is the process introduced in \eqref{eqn:def:ghp} with $d=2$ and $\mathbf{h}=(1+\frac{H-1}{2},1+\frac{H-1}{2})$. To generate the random variables $(\sigma_{J,\textbf{k}}^{(\mathbf{h})})_{J \in \N, \mathbf{k} \in \Z^d}$, one can use the more explicit equation, derived from \eqref{eqn:defofsigma},
\begin{equation}\label{eqn:rosenblatt1}
\sigma_{J,k_1,k_2}^{(1+\frac{H-1}{2},1+\frac{H-1}{2})}=Z_{J,k_{1}}^{(\frac{H}{2})}Z_{J,k_{2}}^{(\frac{H}{2})}-\mathbb{E}[Z_{J,k_{1}}^{(\frac{H}{2})}Z_{J,k_{2}}^{(\frac{H}{2})}].
\end{equation}
where $(Z_{J,\ell}^{(\frac{H}{2})})_{\ell \in \N}$ is the Gaussian FARIMA $(0,\frac{H}{2},0)$ sequence associated to the sequence of Gaussian random variables $(2^{\frac{J}{2}} \int_{\R} \phi(2^J x-k) dB(x))_{k\in\Z}$. Note that we have \cite[Remark 3.5]{ayachehamonierloosveldt}, for all $J \in \N$ and $\mathbf{k} \in \Z^2$
\begin{equation}\label{explicitexpressioncovrosenb}
\mathbb{E}[Z_{J,k_{1}}^{(\frac{H}{2})}Z_{J,k_{2}}^{(\frac{H}{2})}] = \frac{1}{2\pi} \int_0^{2 \pi} e^{i \xi (k_2-k_1)} \left| 2 \sin \left( \frac{\xi}{2}\right)\right|^{-H} \, d\xi.
\end{equation}

On the other side, for all $\mathbf{k} \in \Z^2$ we get, from the Definition \ref{def:fractscal} of the fractional scaling function and equation \eqref{prodscalfuncenfourier}
\begin{equation}\label{eqn:rosenblat2}
\int_{\R} \prod_{\ell=1}^2 \Phi_\Delta^{(\frac{H}{2})}(s-k_\ell) \, ds = \int_{- \frac{4\pi}{3}}^\frac{4\pi}{3} e^{i \xi (k_2-k_1)} \left(\frac{\sin(\xi/2)}{\xi/2} \right)^{H} \left| \widehat{\phi}(\xi) \right|^2 d\xi.
\end{equation}

Note that the two expressions \eqref{eqn:rosenblatt1} and \eqref{eqn:rosenblat2} are symmetric with respect to $k_1$ and $k_2$. Therefore, one can rewrite the recursive formula \eqref{formrec}, in that case,

\begin{align*}
 & s_{m+1,J}-s_{m,J} = 2^{-J H} \left(\left(Z_{J,m}^{(\frac{H}{2})} \right)^2-\mathbb{E}[\left(Z_{J,m}^{(\frac{H}{2})} \right)^2] \right) \int_{- \frac{4\pi}{3}}^\frac{4\pi}{3}  \left(\frac{\sin(\xi/2)}{\xi/2} \right)^{H} \left| \widehat{\phi}(\xi) \right|^2 d\xi \\
 & + 2 \times 2^{-J H} \!\!\!\!\!\! \!\!\!\!\!\!  \!\!\!\!\!\! \sum_{k=\max \{m_0, m- \lfloor 2^{\varepsilon J} \rfloor \}}^{m-1} \!\!\!\!\!\!  \left(Z_{J,m}^{(\frac{H}{2})}Z_{J,k}^{(\frac{H}{2})}-\mathbb{E}[Z_{J,m}^{(\frac{H}{2})}Z_{J,k}^{(\frac{H}{2})}] \right) \int_{- \frac{4\pi}{3}}^\frac{4\pi}{3} e^{i \xi (m-k)} \left(\frac{\sin(\xi/2)}{\xi/2} \right)^{H} \left| \widehat{\phi}(\xi) \right|^2 d\xi.
\end{align*}

Therefore, if $T >0$ is fixed and if
\begin{enumerate}
\item $\texttt{integralvector}$ is a vector of size $\lfloor 2^{\varepsilon J} \rfloor +1 $ such that, for all $0 \leq k \leq \lfloor 2^{\varepsilon J} \rfloor$, \texttt{integralvector}[k] is the value of
\[\int_{- \frac{4\pi}{3}}^\frac{4\pi}{3} e^{i \xi k} \left(\frac{\sin(\xi/2)}{\xi/2} \right)^{H} \left| \widehat{\phi}(\xi) \right|^2 d\xi; \]
\item $\texttt{IJ}$ is a vector containing $\mathcal{I}_J(T)$ with \texttt{IJ}[0]$=m_0$
\item $\texttt{farima}$ contains a simulation of the random variables $(Z_{J,\ell}^{(\frac{H}{2})})_{\ell \in \mathcal{I}_J(T)}$;
\item $\texttt{sigmaJ}(\texttt{farima},k_1,k_2,\frac{H}{2})$ is computing \eqref{eqn:rosenblatt1} using the simulation $\texttt{farima}$ and the formula \eqref{explicitexpressioncovrosenb}.
\item for all $m \in {I}_J(T)$, $\texttt{epaissi(m)}$ is a vector of size $\min \{\lfloor 2^{\varepsilon J} \rfloor,m-m_0 \}+1$ such that, for all $0 \leq i \leq \min \{\lfloor 2^{\varepsilon J} \rfloor,m-m_0 \}$, $\texttt{epaissi(m)}[i]=m-i$.
\end{enumerate}
the sequence $(s_{m,J})_{m \in \mathcal{I}_J(T)}$ is simulated via

\begin{codebox}
    \Procname{ \textsc{Rosenblatt}(J) }
    \li result[0]=\texttt{sigmaJ}(\texttt{farima},\texttt{IJ}[0],\texttt{IJ}[0],$\frac{H}{2}$) $\times$ \texttt{integralvector}[0]
    \li \For index=1 \To length(\texttt{IJ})
    \li \Do element=\texttt{epaissi}(\texttt{IJ}[index])
    \li  part=\texttt{sigmaJ}(\texttt{farima},element[0],element[0],$\frac{H}{2}$) $\times$ \texttt{integralvector}[0]
    \li  \For i=1 \To length(element)
    \li \Do part=part+2 $\times$ \texttt{sigmaJ}(\texttt{farima},element[0],element[i],$\frac{H}{2}$) $\times$ \texttt{integralvector}[i]
    \End 
    \li  result[index]=result[index-1]+part
    \End   
\end{codebox}

Indeed, it suffices to multiply the obtained vector result by $2^{-JH}$ to get a simulation of $(s_{m,J})_{m \in \mathcal{I}_J(T)}$.

The Figure \ref{fig:rosenblatt} below shows four simulated sample paths of Rosenblatt processes of respective Hurst parameters $0,6$; $0,7$; $0,8$ and $0,9$. They are obtained using our algorithm with parameters $J=20$, $a=0,75$ and $\varepsilon=10^{-4}$. Note that the FARIMA sequence $(Z_{J,\ell}^{(\frac{H}{2})})_{\ell \in \mathcal{I}_J(T)}$ is computed using fast wavelet transform as suggested in \cite{PIPIRAS200549,ABRY20062326}.

\begin{figure}[h!]
\begin{center}
\includegraphics[width=9.5cm]{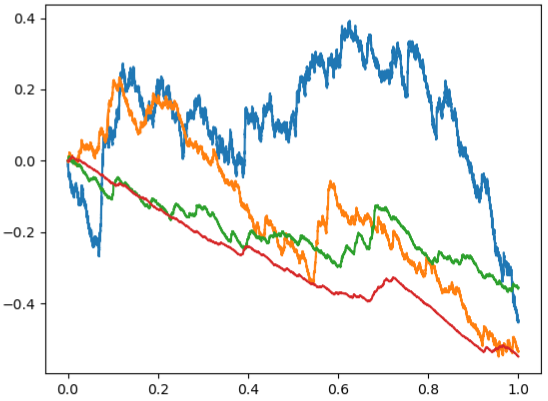}
\caption{Sample paths of the Rosenblatt process of Hurst parameter $0,6$ (blue), $0,7$ (orange), $0,8$ (green) and $0,9$ (red).} \label{fig:rosenblatt}
\end{center}
\end{figure}

\subsection{Simulation of the Hermite process of order 3}

The Hermite process of order 3 and Hurst parameter $H \in (1/2,1)$ is the process \eqref{eqn:def:ghp} with $d=3$ and $\mathbf{h}=(1+\frac{H-1}{3},1+\frac{H-1}{3},1+\frac{H-1}{3})$. To generate the random variables $(\sigma_{J,\textbf{k}}^{(\mathbf{h})})_{J \in \N, \mathbf{k} \in \Z^d}$, one can use the more explicit equation, derived from \eqref{eqn:defofsigma},

\begin{eqnarray}
&&\sigma_{J,k_1,k_2,k_3}^{(1+\frac{H-1}{3},1+\frac{H-1}{3},1+\frac{H-1}{3})} \nonumber\\
&&=Z_{J,k_{1}}^{(\frac{2H+1}{6})}Z_{J,k_{2}}^{(\frac{2H+1}{6})}Z_{J,k_{3}}^{(\frac{2H+1}{6})}-\mathbb{E}[Z_{J,k_{1}}^{(\frac{2H+1}{6})}Z_{J,k_{2}}^{(\frac{2H+1}{6})}]Z_{J,k_{3}}^{(\frac{2H+1}{6})} \nonumber\\
&&\hspace{0.5cm}-\mathbb{E}[Z_{J,k_{1}}^{(\frac{2H+1}{6})}Z_{J,k_{3}}^{(\frac{2H+1}{6})}]Z_{J,k_{2}}^{(\frac{2H+1}{6})}-\mathbb{E}[Z_{J,k_{2}}^{(\frac{2H+1}{6})}Z_{J,k_{3}}^{(\frac{2H+1}{6})}]Z_{J,k_{1}}^{(\frac{2H+1}{6})},\label{eqn:hermite1}
\end{eqnarray}
where $(Z_{J,\ell}^{(\frac{2H+1}{6})})_{\ell \in \N}$ is the Gaussian FARIMA $(0,\frac{2H+1}{6},0)$ sequence associated to the sequence of Gaussian random variables $(2^{\frac{J}{2}} \int_{\R} \phi(2^J x-k) dB(x))_{k\in\Z}$. In this case, we have \cite[Remark 3.5]{ayachehamonierloosveldt}, for all $J \in \N$ and $k_1,k_2 \in \Z$,
\begin{equation}\label{explicitcovher3}
\mathbb{E}[Z_{J,k_{1}}^{(\frac{2H+1}{6})}Z_{J,k_{2}}^{(\frac{2H+1}{6})}] = \frac{1}{2\pi} \int_0^{2 \pi} e^{i(k_2-k_1)} \left| 2 \sin \left( \frac{\xi}{2}\right)\right|^{-\frac{2H}{3}} \, d\xi.
\end{equation}
Also, from the Definition \ref{def:fractscal} of the fractional scaling function and equation \eqref{prodscalfuncenfourier}, we get, for all $\mathbf{k} \in \Z^3$,

\begin{align}\label{eqn:hermite2}
\int_{\R} & \prod_{\ell=1}^3 \Phi_\Delta^{(\frac{2H+1}{6})}(s-k_\ell) \, ds \nonumber \\ 
&= \iint_{[- \frac{4\pi}{3},\frac{4\pi}{3}]^2}  \!\!\!\!\!\! \!\!\!\!\!\! \!\!\!\!\!\! e^{i \xi (k_2-k_1)} e^{i \eta (k_3-k_2)}  \widehat{\phi}(\xi) \widehat{\phi}(\xi-\eta) \widehat{\phi}(\eta)  \left( \frac{\sin(\xi/2)}{\xi/2} \frac{\sin((\xi-\eta)/2)}{(\xi-\eta)/2} \frac{\sin(\eta/2)}{\eta/2} \right)^{\frac{2H+1}{6}}\!\!\!\!\!\! \!\!\!\!\!\! d\xi d\eta.
\end{align}

Using again the symmetry of the expressions \eqref{eqn:hermite1} and \eqref{eqn:hermite2}, we rewrite the recursive formula \eqref{formrec} in this context:
\begin{tiny}
\begin{align*}
 & s_{m+1,J}-s_{m,J} \\ &= 2^{-J H} \left[ \left( \left(Z_{J,m}^{(\frac{2H+1}{6})}\right)^3-3 \E\left[ \left(Z_{J,m}^{(\frac{2H+1}{6})}\right)^2 \right]\left(Z_{J,m}^{(\frac{2H+1}{6})}\right) \right) \right. \\ & \left. \iint_{[- \frac{4\pi}{3},\frac{4\pi}{3}]^2}    \widehat{\phi}(\xi) \widehat{\phi}(\xi-\eta) \widehat{\phi}(\eta)  \left( \frac{\sin(\xi/2)}{\xi/2} \frac{\sin((\xi-\eta)/2)}{(\xi-\eta)/2} \frac{\sin(\eta/2)}{\eta/2} \right)^{\frac{2H+1}{6}}\!\!\!\!\!\!  d\xi d\eta \right] \\
 &+ 3 \times 2^{-J H} \!\!\!\!\!\! \!\!\!\!\!\!  \!\!\!\!\!\! \sum_{k=\max \{m_0, m- \lfloor 2^{\varepsilon J} \rfloor \}}^{m-1} \left[ \left(\left(Z_{J,m}^{(\frac{2H+1}{6})}\right)^2 Z_{J,k}^{(\frac{2H+1}{6}) }-\E\left[\left(Z_{J,m}^{(\frac{2H+1}{6})}\right)^2  \right]Z_{J,k}^{(\frac{2H+1}{6})} - 2 \E \left[Z_{J,m}^{(\frac{2H+1}{6})} Z_{J,k}^{(\frac{2H+1}{6})}\right]Z_{J,m}^{(\frac{2H+1}{6})}\right) \right. \\
 & \left.\iint_{[- \frac{4\pi}{3},\frac{4\pi}{3}]^2}   e^{i \eta (m-k)} \widehat{\phi}(\xi) \widehat{\phi}(\xi-\eta) \widehat{\phi}(\eta)  \left( \frac{\sin(\xi/2)}{\xi/2} \frac{\sin((\xi-\eta)/2)}{(\xi-\eta)/2} \frac{\sin(\eta/2)}{\eta/2} \right)^{\frac{2H+1}{6}}\!\!\!\!\!\!  d\xi d\eta \right] \\
 &+ 3 \times 2^{-J H} \!\!\!\!\!\! \!\!\!\!\!\!  \!\!\!\!\!\! \sum_{k=\max \{m_0, m- \lfloor 2^{\varepsilon J} \rfloor \}}^{m-1} \left[ \left(\left(Z_{J,k}^{(\frac{2H+1}{6})}\right)^2 Z_{J,m}^{(\frac{2H+1}{6}) }-\E\left[\left(Z_{J,k}^{(\frac{2H+1}{6})}\right)^2  \right]Z_{J,m}^{(\frac{2H+1}{6})} - 2 \E \left[Z_{J,k}^{(\frac{2H+1}{6})} Z_{J,m}^{(\frac{2H+1}{6})}\right]Z_{J,k}^{(\frac{2H+1}{6})}\right) \right. \\  & \left.\iint_{[- \frac{4\pi}{3},\frac{4\pi}{3}]^2}   e^{i \eta (m-k)} \widehat{\phi}(\xi) \widehat{\phi}(\xi-\eta) \widehat{\phi}(\eta)  \left( \frac{\sin(\xi/2)}{\xi/2} \frac{\sin((\xi-\eta)/2)}{(\xi-\eta)/2} \frac{\sin(\eta/2)}{\eta/2} \right)^{\frac{2H+1}{6}}\!\!\!\!\!\!  d\xi d\eta \right] \\
 & + 6 \times 2^{-J H} \!\!\!\!\!\! \!\!\!\!\!\!  \!\!\!\!\!\! \sum_{k=\max \{m_0, m- \lfloor 2^{\varepsilon J} \rfloor \}}^{m-1} \sum_{\ell=\max \{m_0, m- \lfloor 2^{\varepsilon J} \rfloor \}}^{k-1} \left[ \left(Z_{J,m}^{(\frac{2H+1}{6})}Z_{J,k}^{(\frac{2H+1}{6})}Z_{J,\ell}^{(\frac{2H+1}{6})}-\mathbb{E}\left[Z_{J,m}^{(\frac{2H+1}{6})}Z_{J,k}^{(\frac{2H+1}{6})}\right]Z_{J,\ell}^{(\frac{2H+1}{6})} \right.\right. \\
&  - \left.\mathbb{E}\left[Z_{J,m}^{(\frac{2H+1}{6})}Z_{J,\ell}^{(\frac{2H+1}{6})}\right]Z_{J,k}^{(\frac{2H+1}{6})}-\mathbb{E}\left[Z_{J,k}^{(\frac{2H+1}{6})}Z_{J,\ell}^{(\frac{2H+1}{6})}\right]Z_{J,m}^{(\frac{2H+1}{6})}  \right) \iint_{[- \frac{4\pi}{3},\frac{4\pi}{3}]^2}   e^{i \xi (m-k)} e^{i \eta (k-\ell)} \\ & \left.\widehat{\phi}(\xi) \widehat{\phi}(\xi-\eta) \widehat{\phi}(\eta)  \left( \frac{\sin(\xi/2)}{\xi/2} \frac{\sin((\xi-\eta)/2)}{(\xi-\eta)/2} \frac{\sin(\eta/2)}{\eta/2} \right)^{\frac{2H+1}{6}}\!\!\!\!\!\!  d\xi d\eta\right].
\end{align*}
\end{tiny}

Therefore, if $T >0$ is fixed and if
\begin{enumerate}
\item $\texttt{integralmatrix}$ is a matrix of dimension $\lfloor 2^{\varepsilon J} \rfloor +1 $ such that, for all $0 \leq k,\ell \leq \lfloor 2^{\varepsilon J} \rfloor$, \texttt{integralmatrix}[k,l] is the value of
\[\iint_{[- \frac{4\pi}{3},\frac{4\pi}{3}]^2}  \!\!\!\!\!\! \!\!\!\!\!\! \!\!\!\!\!\! e^{i \xi k} e^{i \eta \ell}  \widehat{\phi}(\xi) \widehat{\phi}(\xi-\eta) \widehat{\phi}(\eta)  \left( \frac{\sin(\xi/2)}{\xi/2} \frac{\sin((\xi-\eta)/2)}{(\xi-\eta)/2} \frac{\sin(\eta/2)}{\eta/2} \right)^{\frac{2H+1}{6}}\!\!\!\!\!\! \!\!\!\!\!\! d\xi d\eta \]

\item $\texttt{IJ}$ is a vector containing $\mathcal{I}_J(T)$ with \texttt{IJ}[0]$=m_0$
\item $\texttt{farima}$ contains a simulation of the random variables $(Z_{J,\ell}^{(\frac{2H+1}{6})})_{\ell \in \mathcal{I}_J(T)}$;
\item $\texttt{sigmaJ}(\texttt{farima},k_1,k_2,k_3,\frac{H}{2})$ is computing \eqref{eqn:hermite1} using the simulation $\texttt{farima}$ and the formula \eqref{explicitcovher3}.
\item for all $m \in {I}_J(T)$, $\texttt{epaissi(m)}$ is a vector of size $\min \{\lfloor 2^{\varepsilon J} \rfloor,m-m_0 \}+1$ such that, for all $0 \leq i \leq \min \{\lfloor 2^{\varepsilon J} \rfloor,m-m_0 \}$, $\texttt{epaissi(m)}[i]=m-i$.
\end{enumerate}
the sequence $(s_{m,J})_{m \in \mathcal{I}_J(T)}$ is simulated via

\begin{small}
\begin{codebox}
    \Procname{ \textsc{Hermite3}(J) }
    \li result[0]=\texttt{sigmaJ}(\texttt{farima},\texttt{IJ}[0],\texttt{IJ}[0],\texttt{IJ}[0],$\frac{H}{2}$) $\times$ \texttt{integralmatrix}[0,0]
    \li \For index=1 \To length(\texttt{IJ})
    \li \Do element=\texttt{epaissi}(\texttt{IJ}[index])
    \li  part=\texttt{sigmaJ}(\texttt{farima},element[0],element[0],element[0],$\frac{H}{2}$) $\times$ \texttt{integralmatrix}[0,0]
    \li  \For i=1 \To length(element)
    \li \Do part=part+3 $\times$ \texttt{sigmaJ}(\texttt{farima},element[0],element[0],element[i],$\frac{H}{2}$) $\times$ \texttt{integralmatrix}[0,i]
    \li  part=part+3 $\times$ \texttt{sigmaJ}(\texttt{farima},element[0],element[i],element[i],$\frac{H}{2}$) $\times$ \texttt{integralmatrix}[0,i]
    \li \For j=i+1 \To  length(element)
    \li \Do part=part+6 $\times$ \texttt{sigmaJ}(\texttt{farima},element[0],element[i],element[j],$\frac{H}{2}$) $\times$ \texttt{integralmatrix}[i,j-i]  
    \End
    \End
    \li  result[index]=result[index-1]+part
    \End   
\end{codebox}

\end{small}

Indeed, it suffices to multiply the obtained vector result by $2^{-JH}$ to get a simulation of $(s_{m,J})_{m \in \mathcal{I}_J(T)}$.

The Figure \ref{fig:hermite} below shows four simulated sample pathes of Hermite processes of order $3$ and respective Hurst parameter $0,6$; $0,7$; $0,8$ and $0,9$. They are obtained using our algorithm again with parameters $J=20$, $a=0,75$ and $\varepsilon=10^{-4}$. The FARIMA sequence $(Z_{J,\ell}^{(\frac{2H+1}{6})})_{\ell \in \mathcal{I}_J(T)}$ is once again computed via a fast wavelet transform.

\begin{figure}[h!]
\begin{center}
\includegraphics[width=9.5cm]{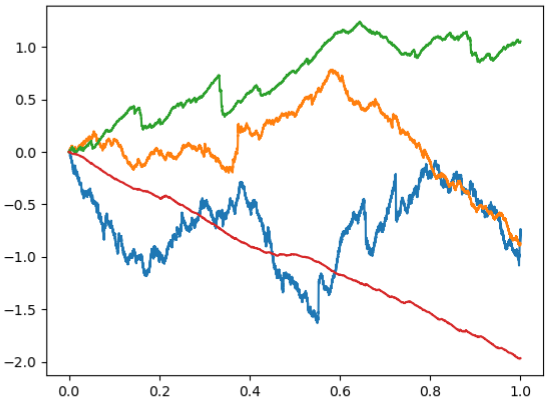}
\caption{Sample paths of the Hermite process of order $3$ of Hurst parameter $0,6$ (blue), $0,7$ (orange), $0,8$ (green) and $0,9$ (red).} \label{fig:hermite}
\end{center}
\end{figure}

Of course, our algorithm can also be used to simulate sample paths of the Fractional Brownian motion with Hurst parameter $H \in (1/2,1)$. In this context, the integrals appearing both in the definitions of $\{S_{\mathbf{h},J}^{(d)}(t)\}_{t \in \R_+}$ and $\{\widetilde{S}_{\mathbf{h},J}^{(d)}(t)\}_{t \in \R_+}$ are all equal to $1$. Thus, the approximation scheme, consisting in summing terms of the Gaussian FARIMA sequence $(Z_{J,\ell}^{(H-\frac{1}{2})})_{\ell \in \N}$, according to the sets $(\mathcal{J}_J^1(m2^{-J}+2^{-aJ}))_{m \in \mathcal{I}_J}$ is particularly elementary to implement. In the figures below, we compare simulated sample paths of the Fractional Brownian, the Rosenblatt process and the Hermite process of order $3$, with Hurst parameter $0,6$; $0,7$; $0,8$ and $0,9$. All simulations are once again obtained using our algorithm with parameters $J=20$, $a=0,75$ and $\varepsilon=10^{-4}$.

\begin{figure}[h!]
\begin{center}
\includegraphics[width=9.5cm]{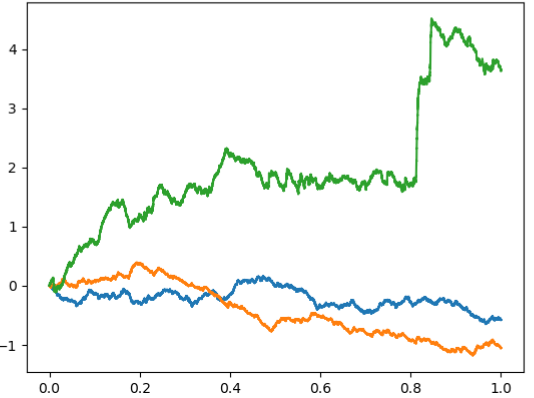}
\caption{Sample paths of the Fractional Brownian Motion (blue), the Rosenblatt process (orange) and the Hermite process of order $3$ (green) with Hurst parameter $0,6$.} \label{fig:0.6}
\end{center}
\end{figure}

\begin{figure}[h!]
\begin{center}
\includegraphics[width=9.5cm]{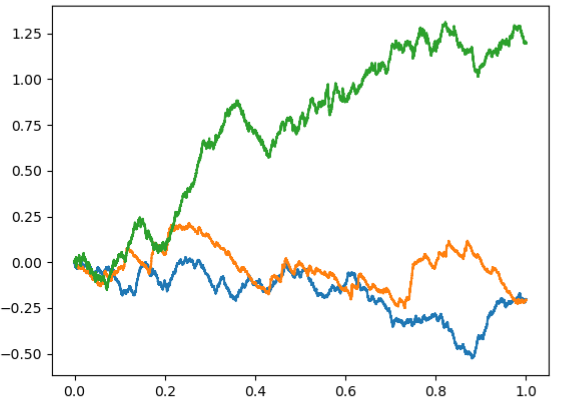}
\caption{Sample paths of the Fractional Brownian Motion (blue), the Rosenblatt process (orange) and the Hermite process of order $3$ (green) with Hurst parameter $0,7$.} \label{fig:0.7}
\end{center}
\end{figure}

\begin{figure}[h!]
\begin{center}
\includegraphics[width=9.5cm]{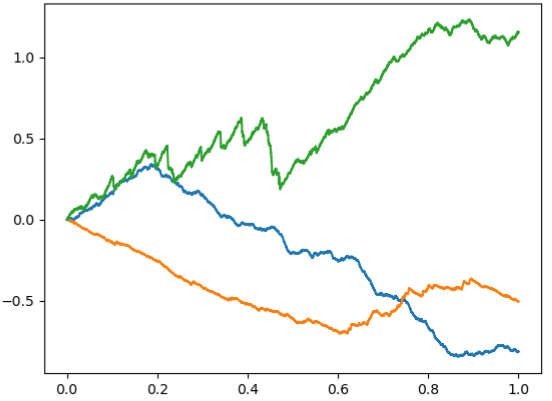}
\caption{Sample paths of the Fractional Brownian Motion (blue), the Rosenblatt process (orange) and the Hermite process of order $3$ (green) with Hurst parameter $0,8$.} \label{fig:0.8}
\end{center}
\end{figure}

\begin{figure}[h!]
\begin{center}
\includegraphics[width=9.5cm]{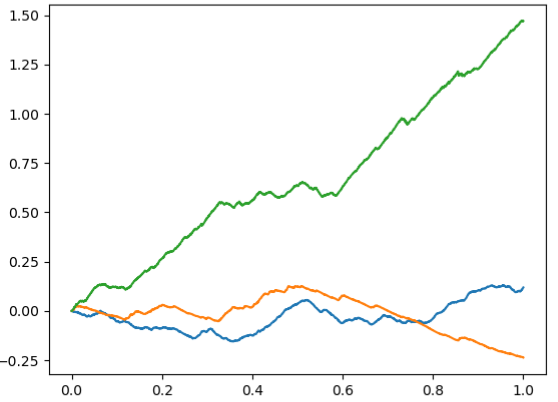}
\caption{Sample paths of the Fractional Brownian Motion (blue), the Rosenblatt process (orange) and the Hermite process of order $3$ (green) with Hurst parameter $0,9$.} \label{fig:0.9}
\end{center}
\end{figure}

\begin{Rmk}
To get the simulations in Figures \ref{fig:0.6}, \ref{fig:0.7}, \ref{fig:0.8}, \ref{fig:0.9}, we use the standard normalisation of Hermite processes (see Remark \ref{rmk:normalisation} above) which is more appropriate to compare the sample paths.
\end{Rmk}

\subsection{Simulation of the generalized Hermite process of order 3}

Our approach can also be used to numerically simulate generalized Hermite processes. We consider here the case of a process of order $d=3$ and arbitrary $\mathbf{h}=(h_1,h_2,h_3)$ satisfying conditions \eqref{eqn:hermi:cond}. To generate the random variables $(\sigma_{J,\textbf{k}}^{(\mathbf{h})})_{J \in \N, \mathbf{k} \in \Z^d}$, we use the more explicit equation, derived from \eqref{eqn:defofsigma},
\begin{eqnarray}
&&\sigma_{J,k_1,k_2,k_3}^{(h_1,h_2,h_3)} \nonumber\\
&&=Z_{J,k_{1}}^{(h_{1}-1/2)}Z_{J,k_{2}}^{(h_{2}-1/2)}Z_{J,k_{3}}^{(h_{3}-1/2)}-\mathbb{E}[Z_{J,k_{1}}^{(h_{1}-1/2)}Z_{J,k_{2}}^{(h_{2}-1/2)}]Z_{J,k_{3}}^{(h_{3}-1/2)} \nonumber \\
&&\hspace{0.5cm}-\mathbb{E}[Z_{J,k_{1}}^{(h_{1}-1/2)}Z_{J,k_{3}}^{(h_{3}-1/2)}]Z_{J,k_{2}}^{(h_{2}-1/2)}-\mathbb{E}[Z_{J,k_{2}}^{(h_{2}-1/2)}Z_{J,k_{3}}^{(h_{3}-1/2)}]Z_{J,k_{1}}^{(h_{1}-1/2)}. \label{genfarimaorder3}
\end{eqnarray}
where, for any $\ell \in \{1,2,3\}$, $(Z_{J,\ell}^{(h_\ell-1/2)})_{\ell \in \N}$ is the Gaussian FARIMA $(0,h_{\ell}-1/2,0)$ sequence associated to the sequence of Gaussian random variables $(2^{\frac{J}{2}} \int_{\R} \phi(2^J x-k) dB(x))_{k\in\Z}$. In this case, we use again \cite[Remark 3.5]{ayachehamonierloosveldt}, to compute, for all $J \in \N$, $\ell,m \in \{1,2,3\}$  and $k_\ell,k_m \in \Z$,
\begin{align}\label{explicitcovgenher}
\E[Z_{j,k_\ell}^{(h_\ell-1/2)}Z_{j,k_m}^{(h_m-1/2)}] =\frac{1}{2\pi}\int_0^{2\pi} e^{i(k_\ell-k_m)\xi} (1-e^{-i\xi})^{-(h_\ell-1/2)} (1-e^{i\xi})^{-(h_m-1/2)} \, d\xi.
\end{align}
Also, from the Definition \ref{def:fractscal} of the fractional scaling function and equation \eqref{prodscalfuncenfourier}, we get, for all $\mathbf{k} \in \Z^3$,

\begin{align}\label{eqn:genhermite2}
\int_{\R} & \prod_{\ell=1}^3 \Phi_\Delta^{(h_\ell-1/2}(s-k_\ell) \, ds \nonumber \\ 
&= \iint_{[- \frac{4\pi}{3},\frac{4\pi}{3}]^2}  \!\!\!\!\!\! \!\!\!\!\!\! \!\!\!\!\!\! e^{i \xi (k_2-k_1+h_2-h_1)} e^{i \eta (k_3-k_2+h_3-h_2)}  \widehat{\phi}(\xi) \widehat{\phi}(\xi-\eta) \widehat{\phi}(\eta)\left( \frac{\sin(\xi/2)}{\xi/2} \right)^{(h_1-1/2)} \times \cdots  \nonumber\\ & \cdots \times  \left( \frac{\sin((\xi-\eta)/2)}{(\xi-\eta)/2} \right)^{(h_2-1/2)}  \left(\frac{\sin(\eta/2)}{\eta/2} \right)^{(h_3-1/2)} \!\!\!\!\!\! \!\!\!\!\!\! d\xi d\eta.
\end{align}

Now, the expressions \eqref{genfarimaorder3} and \eqref{eqn:genhermite2} are not symmetric, which means that we have to compute much more quantities in our algorithm. Indeed, if $T >0$ is fixed and if
\begin{enumerate}
\item $\texttt{integralmatrix}$ is a matrix of dimension $2 \times \lfloor 2^{\varepsilon J} \rfloor +1 $ such that, for all $- \lfloor 2^{\varepsilon J} \rfloor  \leq k,\ell \leq \lfloor 2^{\varepsilon J} \rfloor$, \texttt{integralmatrix}[k,l] is the value of
\begin{align*}
 \iint_{[- \frac{4\pi}{3},\frac{4\pi}{3}]^2} & \!\!\!\!\!\! \!\!\!\!\!\! \!\!\!\!\!\! e^{i \xi (k+h_2-h_1)} e^{i \eta (\ell+h_3-h_2)}  \widehat{\phi}(\xi) \widehat{\phi}(\xi-\eta) \widehat{\phi}(\eta)\left( \frac{\sin(\xi/2)}{\xi/2} \right)^{(h_1-1/2)} \times \cdots  \nonumber\\ & \cdots \times  \left( \frac{\sin((\xi-\eta)/2)}{(\xi-\eta)/2} \right)^{(h_2-1/2)}  \left(\frac{\sin(\eta/2)}{\eta/2} \right)^{(h_3-1/2)} \!\!\!\!\!\! \!\!\!\!\!\! d\xi d\eta.
\end{align*}

\item $\texttt{IJ}$ is a vector containing $\mathcal{I}_J(T)$ with \texttt{IJ}[0]$=m_0$
\item for any $\ell \in \{1,2,3\}$, $f_\ell$ contains a simulation of the random variables $(Z_{J,m}^{(h_\ell-1/2)})_{m \in \mathcal{I}_J(T)}$;
\item $\texttt{sigmaJ}(f_1,f_2,f_3,k_1,k_2,k_3,\mathbf{h})$ is computing \eqref{genfarimaorder3} using the three simulations $f_1$, $f_2$, $f_3$ and the formula \eqref{explicitcovgenher}.
\item for all $m \in {I}_J(T)$, $\texttt{epaissi(m)}$ is a vector of size $\min \{\lfloor 2^{\varepsilon J} \rfloor,m-m_0 \}+1$ such that, for all $0 \leq i \leq \min \{\lfloor 2^{\varepsilon J} \rfloor,m-m_0 \}$, $\texttt{epaissi(m)}[i]=m-i$.
\end{enumerate}
the sequence $(s_{m,J})_{m \in \mathcal{I}_J(T)}$ is simulated via

\begin{small}
\begin{codebox}
    \Procname{ \textsc{GenHermite3}(J) }
    \li result[0]=\texttt{sigmaJ}($f_1$,$f_2$,$f_3$,\texttt{IJ}[0],\texttt{IJ}[0],\texttt{IJ}[0],$\mathbf{h}$) $\times$ \texttt{integralmatrix}[0,0]
    \li \For index=1 \To length(\texttt{IJ})
    \li \Do element=\texttt{epaissi}(\texttt{IJ}[index])
    \li  part=\texttt{sigmaJ}($f_1$,$f_2$,$f_3$,element[0],element[0],element[0],$\mathbf{h}$) $\times$ \texttt{integralmatrix}[0,0]
    \li  \For i=1 \To length(element)
    \li \Do part=part+ \texttt{sigmaJ}($f_1$,$f_2$,$f_3$,element[0],element[0],element[i],$\mathbf{h}$) $\times$ \texttt{integralmatrix}[0,-i]
    \li part=part+ \texttt{sigmaJ}($f_1$,$f_2$,$f_3$,element[0],element[i],element[0],$\mathbf{h}$) $\times$ \texttt{integralmatrix}[-i,i]
    \li part=part+ \texttt{sigmaJ}($f_1$,$f_2$,$f_3$,element[i],element[0],element[0],$\mathbf{h}$) $\times$ \texttt{integralmatrix}[i,0]
    \li  part=part+ \texttt{sigmaJ}($f_1$,$f_2$,$f_3$,element[0],element[i],element[i],$\mathbf{h}$) $\times$ \texttt{integralmatrix}[-i,0]
    \li  part=part+ \texttt{sigmaJ}($f_1$,$f_2$,$f_3$,element[i],element[0],element[i],$\mathbf{h}$) $\times$ \texttt{integralmatrix}[i,-i]
    \li part=part+ \texttt{sigmaJ}($f_1$,$f_2$,$f_3$,element[i],element[i],element[0],$\mathbf{h}$) $\times$ \texttt{integralmatrix}[0,i]
    \li \For j=1 \To  length(element)
    \li \Do \If j $\neq$ i
    \li \Do part=part+ \texttt{sigmaJ}($f_1$,$f_2$,$f_3$,element[0],element[i],element[j],$\mathbf{h}$) $\times$ \texttt{integralmatrix}[-i,j-i]
    \li part=part+ \texttt{sigmaJ}($f_1$,$f_2$,$f_3$,element[0],element[j],element[i],$\mathbf{h}$) $\times$ \texttt{integralmatrix}[-j,j-i]
    \li part=part+ \texttt{sigmaJ}($f_1$,$f_2$,$f_3$,element[i],element[0],element[j],$\mathbf{h}$) $\times$ \texttt{integralmatrix}[i,-j]
    \li part=part+ \texttt{sigmaJ}($f_1$,$f_2$,$f_3$,element[j],element[0],element[i],$\mathbf{h}$) $\times$ \texttt{integralmatrix}[j,-i]
    \li part=part+ \texttt{sigmaJ}($f_1$,$f_2$,$f_3$,element[i],element[j],element[0],$\mathbf{h}$) $\times$ \texttt{integralmatrix}[i-j,j]  
    \li part=part+ \texttt{sigmaJ}($f_1$,$f_2$,$f_3$,element[j],element[i],element[0],$\mathbf{h}$) $\times$ \texttt{integralmatrix}[j-i,i]      
    \End
    \End
    \End
    \li  result[index]=result[index-1]+part
    \End   
\end{codebox}

\end{small}

Due to the numerous additional computations, this algorithm clearly needed more time to produce a simulation than its counterpart for the usual Hermite process of order $3$. For this reason, the sample path displayed in Figure \ref{fig:genhermite3} below is obtained with $J=15$ (instead of $J=20$ in the previous figures). The other parameters are $h_1=0,8$; $h_2=0,85$ and $h_3=0,9$, $a=0,75$ and $\varepsilon=10^{-4}$.

\begin{figure}[h!]
\begin{center}
\includegraphics[width=9.5cm]{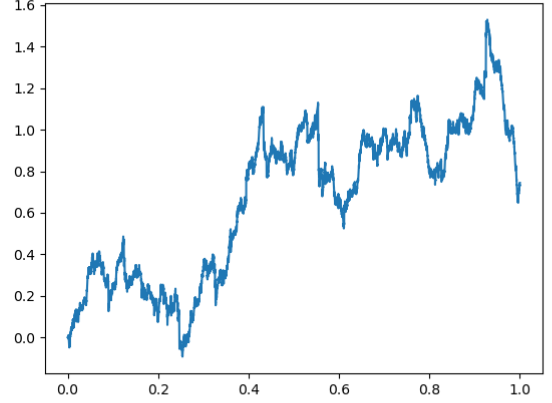}
\caption{Sample path of the generalized Hermite of order $3$ with index $\mathbf{h}=(0,8 , 0,85, 0,9)$.} \label{fig:genhermite3}
\end{center}
\end{figure}

\clearpage

\bibliography{biblio}{}
\bibliographystyle{plain}

\end{document}